\newcommand{\ulpullback}[1][ul]{\save*!/#1-1.2pc/#1:(-1,1)@^{|-}\restore}
\newcommand{\drpullback}[1][dr]{\save*!/#1-1.2pc/#1:(-1,1)@^{|-}\restore}
\theoremstyle{change}
\newtheorem{lemma}{Lemma.}[section]
\newtheorem{prop}[lemma]{Proposition.}
\newtheorem{theorem}[lemma]{Theorem.}
\newtheorem{taller}[lemma]{$\!\!$}
\newenvironment{blanko}[1]%
{\begin{taller}{\normalfont\bfseries #1}\normalfont}%
{\end{taller}}
\providecommand{\qed}{\hspace*{\fill}$\Box$}
\newenvironment{proof}{%
\begin{list}{\em Proof. }%
{\setlength{\labelsep}{0mm}\setlength{\leftmargin}{0mm}%
\setlength{\labelwidth}{0mm}\setlength{\listparindent}{\parindent}%
\setlength{\parsep}{\parskip}\setlength{\partopsep}{0mm}}%
\item}{\qed\end{list}}
\newenvironment{proof*}[1]{%
\begin{list}{\em #1 }%
{\setlength{\labelsep}{0mm}\setlength{\leftmargin}{0mm}%
\setlength{\labelwidth}{0mm}\setlength{\listparindent}{\parindent}%
\setlength{\parsep}{\parskip}\setlength{\partopsep}{0mm}}%
\item}{\qed\end{list}}
\newcommand{\ov}{\overline}
\newcommand{\op}{^{\text{{\rm{op}}}}}
\providecommand{\kat}[1]{\text{\textbf{\textsl{#1}}}}
\newcommand{\el}{\operatorname{el}}
\newcommand{\iso}{\operatorname{iso}}
\newcommand{\Sub}{\operatorname{Sub}}
\newcommand{\isopil}{\stackrel{\raisebox{0.1ex}[0ex][0ex]{\(\sim\)}}%
			{\raisebox{-0.15ex}[0.28ex]{\(\rightarrow\)}}}
\newcommand{\Aut}{\operatorname{Aut}}
\DeclareMathOperator*{\colim}{colim}
\newcommand{\Hom}{\operatorname{Hom}}
\newcommand{\Set}{\kat{Set}}
\newcommand{\N}{\mathbb{N}}
\newcommand{\Sh}{\kat{Sh}}
\newcommand{\PrSh}{\kat{PrSh}}
\newcommand{\Gr}{\kat{Gr}}
\newcommand{\Grwie}{\kat{G\.r}}
\newcommand{\elGr}{\kat{elGr}}
\newcommand{\graf}[1]{\mathsf{#1}}
\newcommand{\triv}{\graf{I}}
\newcommand{\comma}{\raisebox{1pt}{$\downarrow$}}
\renewcommand{\epsilon}{\varepsilon}
\newcommand{\onedot}{
  \bsegment
    \move (0 0) \fcir f:0 r:2
  \esegment
}
\newcommand{\subjclass}[2][2010]{%
  \let\@oldtitle\@title%
  \gdef\@title{\@oldtitle\footnotetext{#1 \emph{Mathematics subject classification.} #2}}%
}
\newcommand{\keywords}[1]{%
  \let\@@oldtitle\@title%
  \gdef\@title{\@@oldtitle\footnotetext{\emph{Key words and phrases.} #1.}}%
}
\begin{document}

\title{Cospan construction of the graph category 
of Borisov and Manin}

\author{\sc Joachim Kock}

\date{\footnotesize \em To Nils Baas, on his 70th birthday}

\subjclass[2010]{18D50; 05C99}
\keywords{Graphs; generalised operads}

\maketitle

\begin{abstract}
  It is shown how the graph category of Borisov and Manin can be constructed
  from (a variant of) the graph category of Joyal and Kock, essentially by
  reversing the generic morphisms.
  More precisely, the morphisms in the Borisov--Manin category are exhibited as
  cospans of reduced covers and refinement morphisms.
\end{abstract}

\section{Introduction}

The following definition of graph with open-ended edges is 
quite standard in the literature, and is usually attributed 
to Kontsevich and Manin~\cite{KM:9402}: a graph is a quadruple $(V,F,\partial,j)$ where
$V$ is a set of {\em vertices}, $F$ is a set of {\em flags} (also called half-edges),
$\partial : F \to V$ is a map (assigning to a flag the vertex it originates from)
and $j: F \to F$ is an involution.  The idea is that the involution interchanges
two flags if they are attached to each other to form an {\em edge}.  The fixpoints
for $j$ are interpreted as open-ended edges, called {\em tails} (this set is denoted 
$T$).

The `problem' with this definition (due to the fact that tails are encoded as 
fixpoints, although a tail is not a flag glued to itself) 
is that it does not naturally lead to good
notions of morphisms, beyond isomorphisms: naive attempts in terms of
structure-preserving maps tend to preserve too much structure.

Borisov and Manin~\cite{Borisov-Manin:0609748} overcame this problem with a
rather intricate definition of morphism, reproduced below.
Their category is rich enough to capture the combinatorics of both operad-like 
and prop-like structures.
Although it works, it is perhaps fair to say that it is not a very
straightforward or natural-looking definition, and that it takes many examples
and drawings to grasp its meaning.

A main point to make to explain the significance of the notion and the intuition
behind it is that there are two main classes of morphisms, called graftings and
compressions  (the compressions are spanned by the contractions and mergers of 
Borisov--Manin), each expressing operations that are fundamental to graph theory: a
grafting is a morphism that takes a graph (connected or not) and constructs a
new graph by clutching together some of its tails to form edges.  A compression
is a morphism that takes a graph, chooses a collection of nonempty (but not
necessarily connected) subgraphs, and contracts each of them to a vertex.  The
point of the Borisov--Manin category is that every morphism factors as a
grafting followed by a compression, and that this factorisation is essentially
unique (cf.~Proposition~\ref{prop:fact} below).
In this way their category can be seen to be the smallest category
containing these two classes of morphisms and a sensible notion of composition of
such morphisms.  It is remarkable that an elementary description could be found to
achieve this.

\bigskip

The present contribution takes the factorisation property
as the starting point for a description of the category, instead of the
intricate elementary description.  This can be formalised through a general
procedure for constructing a category from two given classes of morphisms, $G$ 
and $C$, with a common object set.  The new morphisms are declared to be 
formal pairs consisting of a morphism in $G$ followed by a morphism in $C$.
To define the composition law for such pairs, one needs a rule for transforming
a pair in the wrong order into a pair in the right order, subject to some
axioms reminiscent of the axioms for bimodules or braids.  This is called a
commutation law, or sometimes a distributive law~\cite{Rosebrugh-Wood:2002}.

In this strategy, the question then becomes first to describe the two
classes of morphisms individually, hopefully in a more conceptual way, and then to
specify the commutation law.  Most of this has already been accomplished, namely in the Joyal--Kock formalism of
graphs~\cite{Joyal-Kock:0908.2675}, recalled below.  This formalism starts with 
a definition of graph featuring a canonical notion of {\em etale morphism}
(such as open inclusions).  This defines the etale topology and a
notion of etale cover; a cover is {\em reduced} when it is bijective on
vertices.  A reduced cover is essentially the same thing as a grafting in the 
sense of Borisov-Manin (cf.~Proposition~\ref{prop:JKrc=BMgr}).

The second class of morphisms is generated from the first by a monad, a version
of the free-compact-symmetric-multicategory monad~\cite{Joyal-Kock:0908.2675}, 
which could be called the free-compact-coloured-prop monad: this
is the class of {\em generic morphisms}, a general categorical
notion~\cite{Weber:TAC13}, which in this case can be described in elementary
terms as {\em refinements}.  A refinement is the opposite of a compression
(Proposition~\ref{prop:JKgen=BMcomp}).  Note here that since a compression is
allowed to contract a non-connected graph, an important minor adjustment is required
in the description of the monad compared to \cite{Joyal-Kock:0908.2675} 
(essentially passing from connected to non-connected).

These two classes of morphisms already live in a single bigger category, namely the
Kleisli category for the monad~\cite{Joyal-Kock:0908.2675},
but in here they interact differently since a
refinement morphism between two graphs in the Kleisli category goes in the opposite
direction of the corresponding compression morphism in the BM category.  However, the
key observation at this point is that reduced covers and refinement morphisms admit
pushouts along each other, yielding new morphisms in the same classes 
(Proposition~\ref{prop:pushout}).  The pushout 
construction
is precisely the required commutation law---in this case a prescription for
formally reversing one of the two classes: the new morphisms
from graph $\graf{T}$ to graph $\graf{R}$ are the cospans in which the 
first leg is a reduced
cover and the second leg a refinement morphism:
$$\xymatrix @C+1.3em {
   \graf{T}\phantom{|}\ar[r]^-{\text{red.cover}} & \phantom{|}\graf{S}\phantom{|} & 
   \ar[l]_-{\text{refinement}} \phantom{|}\graf{R}   .
}$$
The composition of cospans is given by pushout.  Now the main result is this:

\bigskip

\noindent
{\bf Theorem \ref{thm:main}.} {\em The Borisov--Manin category of graphs is equivalent to
the category whose morphisms are cospans of reduced covers and refinement morphisms
in the Kleisli category.}

\bigskip

The result shows that the 
Borisov--Manin category of graphs, although at first sight it may appear a bit
ad hoc, can be derived from general principles and standard constructions:
indeed the notions of etale morphism and reduced cover are completely canonical,
and the notion of generic morphism is characterised by a universal property,
relative to the monad, in turn defined in terms of colimits and presheaves.
This gives some theoretical justification for the BM definition, complementing
the significant empirical fact that it just works, and complementing its
pleasing elementary character.

Although the main arguments in the proof are conceptual, some details of the 
comparison must be done by hand.  These calculations are illustrative for both 
formalisms, and it is hoped that they too can be of some interest also beyond the 
theorem established.

\section{Borisov--Manin category of graphs}

With graphs defined as in the first paragraph of the
introduction, the graph category $\kat{BM}$ of
Borisov and Manin~\cite{Borisov-Manin:0609748} has the following morphisms.
\begin{quote}
{\bf Definition \cite[1.2.1.]{Borisov-Manin:0609748}.}
Let $\tau$, $\sigma$ be two graphs.
{\it A morphism $h:\,\tau\to \sigma$
is a triple $(h^F,h_V, j_h)$, where
$h^F:\,F_{\sigma}\to F_{\tau}$ is a contravariant map,
$h_V:\,V_{\tau}\to V_{\sigma}$ is a covariant map,
and $j_h$ is an involution
on the set $F_{\tau}\setminus h^F(F_{\sigma})$. This data
must satisfy the following conditions.

\smallskip

(i) $h^F$ is injective, $h_V$ is surjective.

\smallskip

(ii) The image $h^F(F_{\sigma})$ and its complement $F_{\tau}\setminus h^F(F_{\sigma})$
are $j_{\tau}$--invariant subsets of flags.

\smallskip

(ii')
The involution $j_h$ is fixpoint free, and agrees with $j_\tau$ on edges
(but must necessarily disagree on tails, since $j_h$ is fixpoint free).

%
%

\smallskip

We will say that $h$ contracts all flags in
$F_{\tau}\setminus h^F(F_{\sigma})$.
If two flags in $F_{\tau}\setminus h^F(F_{\sigma})$
form an edge, we say that this edge is contracted by $h$.
If two tails in $F_{\tau}\setminus h^F(F_{\sigma})$
form an orbit of $j_h$, we say that
it is a virtual edge contracted by $h$.

\smallskip

(iii) If a flag $f_{\tau}$ is not contracted by $h$,
that is, has the form $h^F(f_{\sigma})$, then
$h_V$ sends $\partial_{\tau}f_{\tau}$ to
$\partial_{\sigma}f_{\sigma}$. Two vertices
of a contracted edge (actual or virtual) must have the same $h_V$--image.

\smallskip

(iv) The bijection $h_F^{-1}:\, h^F(F_{\sigma})\to F_{\sigma}$
maps edges of $\tau$ to edges of $\sigma$.

If it maps a pair
of tails of $\tau$ to an edge of $\sigma$,
we will say that $h$ grafts these tails.

\smallskip

Given two morphisms
$$\xymatrix{
   \tau \ar[r]^g & \sigma \ar[r]^c & \rho
}$$
the composite morphism $h= c \circ g$ is defined to have
$h_V = c_V \circ g_V$  and $h^F = g^F \circ c^F$.  To complete the definition 
of $h$ we must  define the fixpoint-free involution
$j_h$ on the complement of $h^F$, which is the (disjoint) union of the 
complement of $g^F$ in $F_\tau$ and the complement of $c^F$ in $F_\sigma\subset 
F_\tau$.   We define $j_h$
to be $j_g$ on the first and $j_c$ on the second.
}
\end{quote}

  The definition is reproduced almost verbatim from
  \cite{Borisov-Manin:0609748}, only with a slight adjustment to the definition
  of $j_h$, here defined on the whole complement. In \cite{Borisov-Manin:0609748}
  it is defined only on the set of
  tails in the complement.  This adjustment 
  facilitates the description of
  the composition law  (adjusted accordingly).

%
%

\begin{blanko}{Grafting.}
  A morphism of graphs $h:\tau\to\sigma$ 
  is called a {\em grafting} when both $h_V$ and $h^F$ are
  bijective.
\end{blanko}

\begin{blanko}{Compression.}
  A morphism of graphs $h:\tau\to\sigma$  is called a {\em compression} when $h^F$ is bijective on
  tails.
\end{blanko}

  The notion of grafting is verbatim from \cite{Borisov-Manin:0609748}.  The
  class of compressions is spanned by two classes of morphisms described in
  \cite{Borisov-Manin:0609748}: contractions and mergers.
  A {\em contraction} is a compression for which
  any two vertices with the same image under $h_V$ are connected by a path of
  contracted edges in $\tau$.
  A {\em merger} is a compression for which $h^F$ is also bijective on edges.
  The reason for singling out the mergers is that in many applications, this
  class of morphisms is actually excluded.  It is excluded in operad-like situations
  (operads, properads, cyclic operads, modular operads, etc.), whereas it is
  needed in prop-like situations (props, wheeled props, etc.).  The reason 
  presently for treating the two classes together is Proposition~\ref{prop:fact} 
  below.

\begin{lemma}\label{lem:jimplied}
  A grafting or compression $h$ is uniquely determined by $h_V$ and $h^F$
  (i.e.~$j_h$ is implied).
\end{lemma}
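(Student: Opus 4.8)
The plan is to observe that the only genuine freedom in the choice of $j_h$ lies in how it pairs up the \emph{tails} contained in the complement $F_{\tau}\setminus h^F(F_{\sigma})$, and then to check that both defining conditions---being a grafting and being a compression---force this set of tails to be empty.

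First I would note that the domain of $j_h$, namely the complement $F_{\tau}\setminus h^F(F_{\sigma})$, is already determined by $h^F$ alone (indeed by its image). By condition (ii) this complement is $j_{\tau}$-invariant, so it decomposes as a disjoint union of $j_{\tau}$-orbits of two kinds: two-element orbits, which are actual edges of $\tau$ lying entirely in the complement, and singleton orbits, which are tails of $\tau$ lying in the complement. On each two-element orbit, condition (ii') forces $j_h$ to agree with $j_{\tau}$; moreover, since $j_h$ then carries each such edge-flag to its $j_{\tau}$-partner, $j_h$ preserves both the edge part and (hence) the tail part of the complement. Thus $j_h$ is completely pinned down on the edges of the complement, and the only data not yet determined by $h_V$ and $h^F$ is the fixpoint-free involution by which $j_h$ pairs the complement-tails into virtual edges.

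It then remains to check that for a grafting and for a compression there are no such tails. For a grafting, $h^F$ is by definition a bijection, so the complement is empty and $j_h$ is the (unique) empty involution---vacuously determined. For a compression, $h^F$ is by definition bijective on tails, so every tail of $\tau$ already lies in the image $h^F(F_{\sigma})$; hence the complement contains no fixpoints of $j_{\tau}$. Being $j_{\tau}$-invariant and fixpoint-free, the complement is then exhausted by two-element $j_{\tau}$-orbits, and by the previous paragraph $j_h$ is forced to be the restriction of $j_{\tau}$ to that set. In either case $j_h$ is recovered from $h_V$ and $h^F$, as claimed.

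I do not expect a serious obstacle here; the statement is essentially a bookkeeping consequence of condition (ii'). The one point that requires care is the correct localisation of where the real freedom in a general morphism resides---namely in the choice of virtual edges on the complement-tails---so that one sees transparently that graftings (empty complement) and compressions (tail-free complement) are precisely the morphisms for which this freedom collapses.
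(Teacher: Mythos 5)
Your argument is correct and follows essentially the same route as the paper's proof: for a grafting the complement of $h^F$ is empty, and for a compression the complement contains no tails (since $h^F$ is bijective on tails), so it consists entirely of edges, where axiom (ii') forces $j_h$ to agree with $j_\tau$. The extra bookkeeping you supply about where the genuine freedom in $j_h$ resides is a harmless elaboration of the same idea.
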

\begin{proof}
  Indeed, for a grafting, $h^F$ is bijective, so there is no complement to
  define $j_h$ on.  For a compression, the complement consists entirely of edges
  (since $h^F$ is a bijection on tails), and here we are forced to let $j_h$
  agree with $j_\tau$ (by axiom (ii')).
\end{proof}

\begin{lemma}\label{lem:hj=jh}
  For a compression $h: \sigma\to\rho$, the flag map $h^F :F_\rho \to F_\sigma$
  sends edges to edges, tails to tails, and in particular
  commutes with the involutions $j_\rho$ and $j_\sigma$.
\end{lemma}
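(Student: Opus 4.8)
The plan is to unpack the definition of a compression and trace through each clause of the Borisov--Manin morphism axioms to see what they force on the flag map. Recall that for a compression $h:\sigma\to\rho$ the map $h^F:F_\rho\to F_\sigma$ is injective (by axiom (i)) and is a bijection on tails (by the definition of compression). I would first argue that $h^F$ sends edges to edges. By axiom (iv), the bijection $(h^F)^{-1}$ onto its image sends edges of $\sigma$ lying in the image to edges of $\rho$; what I actually need is the forward statement, that an edge of $\rho$ maps to an edge of $\sigma$. Here the key leverage is that $h^F$ is bijective on tails: a tail of $\rho$ can only map to a tail of $\sigma$, so no edge of $\sigma$ in the image arises as the image of two tails, and conversely no edge of $\rho$ can map to a pair of tails (that would require two tails of $\rho$ to map to those tails, contradicting injectivity together with tail-bijectivity). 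Combined with axiom (iv), this pins down that $h^F$ carries each edge of $\rho$ to an edge of $\sigma$.

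Next I would handle tails: since $h^F$ is a bijection on tails by hypothesis, it certainly sends tails to tails. Having established that edges go to edges and tails go to tails, the statement about commuting with the involutions is essentially a repackaging. For a tail $f$ of $\rho$ we have $j_\rho(f)=f$, and its image $h^F(f)$ is a tail of $\sigma$, so $j_\sigma(h^F(f))=h^F(f)=h^F(j_\rho(f))$, giving the commutation on tails. For an edge $\{f,f'\}$ of $\rho$, with $j_\rho(f)=f'$, the pair maps to an edge $\{h^F(f),h^F(f')\}$ of $\sigma$; I would invoke axiom (iv) once more, which guarantees that $(h^F)^{-1}$ sends this edge of $\sigma$ back to the edge $\{f,f'\}$ of $\rho$, so the two flags $h^F(f)$ and $h^F(f')$ are genuinely $j_\sigma$-partners, i.e.\ $j_\sigma(h^F(f))=h^F(f')=h^F(j_\rho(f))$. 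This verifies $h^F\circ j_\rho=j_\sigma\circ h^F$ on all of $F_\rho$.

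The step I expect to be the main obstacle is the forward direction of the edge claim --- showing that an edge of $\rho$ cannot map to a \emph{pair of tails} of $\sigma$. Axiom (iv) is phrased contravariantly, describing what $(h^F)^{-1}$ does on the image, and it explicitly allows a pair of tails of $\sigma$ to be grafted to an edge of $\rho$; so naively axiom (iv) does not by itself exclude an edge of $\rho$ coming from two tails. The resolution is precisely the compression hypothesis: $h^F$ being a bijection on tails means every tail of $\sigma$ is already hit by a (unique) tail of $\rho$, so no tail of $\sigma$ is free to be paired up as the image of an edge-flag of $\rho$. I would make this counting argument explicit, as it is the one point where the definition of compression (rather than a general BM morphism) is actually used. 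Once that is nailed down, the remaining verifications are immediate from the definitions of $j_\rho$ and $j_\sigma$.
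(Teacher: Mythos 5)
Your proposal is correct and follows essentially the same route as the paper's proof: tail-bijectivity plus injectivity of $h^F$ forces non-tails to non-tails, and axiom (iv) then upgrades this to edges going to edges, from which commutation with the involutions follows. You simply spell out the counting argument and the involution check in more detail than the paper does.
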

\begin{proof}
  Since $h^F$ induces a bijection on tails, it must also map non-tails to
  non-tails.  On the other hand, by axiom (iv) the inverse map $(h^F)^{-1} :
  h^F(F_\rho) \to F_\rho$ maps edges to edges, so it follows that $h^F$ maps
  edges to edges.  Thus altogether it maps tails to tails and edges to edges,
  and hence commutes with the involutions.
\end{proof}

The following lemma expresses the important property that the `subgraph' 
compressed by a compression has the same `interface' (to be formalised in 
\ref{def}) as the vertex it is 
compressed to.  The formulation is slightly awkward due to the fact that
the notion of subgraph has not been defined (and at any rate is not given by
a morphism in the category):
\begin{lemma}\label{lem:boundarypres}
  For a compression $h:\sigma\to\rho$, and for each vertex $x\in V_\rho$, the 
  flag map $h^F: F_\rho\to F_\sigma$  restricts to a bijection
  $$
  \partial^{-1}(x) \isopil h^F(F_\rho) \cap \partial^{-1} h_V^{-1}(x) .
  $$
\end{lemma}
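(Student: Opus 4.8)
The plan is to exploit the injectivity of $h^F$ (axiom (i)) to reduce the claim to an identification of images, and then to read everything off from axiom (iii). Since $h^F$ is globally injective, its restriction to any subset of $F_\rho$ is automatically injective onto its image; and the restriction to $\partial_\rho^{-1}(x)$ lands in $h^F(F_\rho)$ by definition. So the only real content is the set equality $h^F\big(\partial_\rho^{-1}(x)\big) = h^F(F_\rho)\cap \partial_\sigma^{-1}\big(h_V^{-1}(x)\big)$, after which injectivity upgrades it to a bijection.

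The single tool I expect to need is axiom (iii), which for the morphism $h:\sigma\to\rho$ asserts that every non-contracted flag $h^F(f_\rho)$ satisfies $h_V\big(\partial_\sigma(h^F(f_\rho))\big)=\partial_\rho(f_\rho)$; that is, $h_V\circ\partial_\sigma\circ h^F=\partial_\rho$ as maps $F_\rho\to V_\rho$. Reading this equation at a fixed $f_\rho\in F_\rho$, the condition $\partial_\rho(f_\rho)=x$ is equivalent to $h_V\big(\partial_\sigma(h^F(f_\rho))\big)=x$, i.e.\ to $\partial_\sigma(h^F(f_\rho))\in h_V^{-1}(x)$, i.e.\ to $h^F(f_\rho)\in \partial_\sigma^{-1}\big(h_V^{-1}(x)\big)$. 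Hence $f_\rho\in\partial_\rho^{-1}(x)$ if and only if $h^F(f_\rho)\in h^F(F_\rho)\cap\partial_\sigma^{-1}\big(h_V^{-1}(x)\big)$, the membership in $h^F(F_\rho)$ being automatic. This is precisely the desired equality.

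Concretely, for the forward inclusion I would take $f_\rho$ with $\partial_\rho(f_\rho)=x$ and invoke the equation to place $h^F(f_\rho)$ in the intersection; for the reverse inclusion I would take $g$ in the intersection, write $g=h^F(f_\rho)$ with $f_\rho$ unique by injectivity, and again apply the equation to conclude $\partial_\rho(f_\rho)=x$, so that $g\in h^F(\partial_\rho^{-1}(x))$. Combined with the injectivity of $h^F$, this gives the claimed bijection.

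I do not anticipate a genuine obstacle: the content is carried entirely by the compatibility $h_V\circ\partial_\sigma\circ h^F=\partial_\rho$ of axiom (iii). The only thing demanding care is the bookkeeping — keeping straight the contravariance of $h^F$ (which runs $F_\rho\to F_\sigma$, opposite to $h$) and correctly unwinding the nested preimage $\partial_\sigma^{-1}(h_V^{-1}(x))$ on the $\sigma$-side. Incidentally, the compression hypothesis is not strictly needed for the bijection itself, since only axioms (i) and (iii) enter; it is rather the compression setting that supplies the intended `interface' reading of the statement.
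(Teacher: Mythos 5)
Your proof is correct and follows essentially the same route as the paper's: injectivity comes from axiom (i), and well-definedness plus surjectivity come from the compatibility $h_V\circ\partial_\sigma\circ h^F=\partial_\rho$ of axiom (iii). Your closing observation is also accurate --- the argument uses only axioms (i) and (iii), so the compression hypothesis serves only to give the statement its intended `interface' interpretation.
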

\begin{proof}
  Injectivity follows from Axiom (i).  For surjectivity, note that
  a $\sigma$-flag on the right-hand side must come from 
  some
  $\rho$-flag, and now it follows from Axiom~(iii)
  that this flag is incident to $x$.
\end{proof}

\begin{prop}\label{prop:fact}
  Every morphism of graphs $h: \tau \to \rho$ factors essentially uniquely as
  $$\xymatrix{
     \tau \ar[rr]^h\ar[rd]_g && \rho \\
     & \sigma \ar[ru]_c & 
  }$$
  where $g$ is a grafting and $c$ is a compression.  More precisely, the classes
  of graftings and compressions form a factorisation system in the
  Borisov--Manin category of graphs.
\end{prop}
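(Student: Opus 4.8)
The plan is to establish the three ingredients of an orthogonal factorization system: existence of a grafting–compression factorization, essential uniqueness (equivalently, the unique diagonal-fill property), and closure of each class under composition and under isomorphisms.

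First I would construct the factorization. Given $h:\tau\to\rho$, the intermediate graph $\sigma$ must have the same vertices and flags as $\tau$, since a grafting is bijective on both; so I set $V_\sigma=V_\tau$, $F_\sigma=F_\tau$, $\partial_\sigma=\partial_\tau$, and take $g:\tau\to\sigma$ to be the identity on flags and on vertices. The only genuine choice is the involution $j_\sigma$, which I define piecewise along the decomposition $F_\tau = h^F(F_\rho)\sqcup(F_\tau\setminus h^F(F_\rho))$ furnished by Axiom~(ii): on the complement I set $j_\sigma:=j_h$ (this is where $g$ grafts the virtual edges and where $c$ will contract), and on the image I transport $j_\rho$ across the bijection $h^F$. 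Then $c:\sigma\to\rho$ is defined by $c^F=h^F$, $c_V=h_V$, $j_c=j_h$.

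Next I would verify the Borisov–Manin axioms for $g$ and $c$ and check $c\circ g=h$. The essential point is that $g$ is a genuine grafting, i.e.\ that $j_\sigma$ agrees with $j_\tau$ on every edge of $\tau$, only fixpoints being allowed to become newly paired. Since Axiom~(ii) makes both $h^F(F_\rho)$ and its complement $j_\tau$-invariant, each edge of $\tau$ lies wholly in one piece: on the complement $j_\sigma=j_h$ agrees with $j_\tau$ by Axiom~(ii'), while on the image Axiom~(iv) for $h$ says $(h^F)^{-1}$ carries that edge to an edge of $\rho$, which is exactly what forces the transported $j_\sigma$ to coincide with $j_\tau$ there. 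That $c$ is a compression reduces to checking $c^F$ is bijective on tails: the tails of $\sigma$ are the fixpoints of $j_\sigma$, and by construction these are precisely the $h^F$-images of the tails of $\rho$. The vertex conditions~(iii) for $g$ and $c$ follow immediately from the corresponding condition for $h$ together with $g_V=\mathrm{id}$ and $c_V=h_V$.

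For essential uniqueness I would observe that in any factorization $h=c\circ g$ with $g$ a grafting, the maps $g_V$ and $g^F$ are forced to be bijections, so $\sigma$ is determined up to canonical isomorphism and the identities $c_V=h_V\circ g_V^{-1}$, $c^F=(g^F)^{-1}\circ h^F$ pin down $c$; the involution $j_\sigma$ is then forced on the complement (where $c$ must contract, so $j_\sigma=j_c$) and on the image (where Lemma~\ref{lem:hj=jh} makes $c^F$ intertwine $j_\rho$ and $j_\sigma$). To upgrade this to a factorization system I would verify orthogonality directly: in a commuting square with a grafting on the left and a compression on the right, the diagonal's vertex and flag maps are uniquely solved for using invertibility of the grafting's two maps, its involution is determined by compatibility, and it remains a routine check that this data is a morphism and that both triangles commute. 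Closure under composition is easy—bijections compose, and Lemma~\ref{lem:hj=jh} shows compressions compose to a map still bijective on tails—and isomorphisms are visibly both graftings and compressions. The step I expect to be the main obstacle is precisely the consistency of the piecewise $j_\sigma$: showing that the transported-$j_\rho$ half and the $j_h$ half fit together into a legitimate involution for which $g$ really is a grafting. This is where all four axioms for $h$ are invoked at once, and where the reformulation of $j_h$ on the whole complement (noted just after the definition) pays off.
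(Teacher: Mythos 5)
Your proposal follows essentially the same route as the paper's proof: the underlying sets of $\sigma$ are forced to be those of $\tau$, the involution $j_\sigma$ is assembled piecewise as $j_h$ on the complement and the transport of $j_\rho$ along $h^F$ on the image (the paper derives the latter from Lemma~\ref{lem:hj=jh} and the former from Lemma~\ref{lem:jimplied} plus the composition law), and uniqueness comes from these forced choices. Your additional verifications of the morphism axioms for $g$ and $c$, of orthogonality, and of closure under composition are correct and merely make explicit what the paper leaves as routine.
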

\begin{proof}
  We construct the middle object $\sigma$ in the factorisation.  Since we want a
  grafting $\tau\to\sigma$, we are forced to take as vertices and flags of
  $\sigma$ the same as those of $\tau$.  Since now $g_V$ and $g^F$ are
  bijections, this also determines $c_V := h_V \circ g_V^{-1}$ and 
  $c^F := (g^F)^{-1} \circ h^F$.  To specify $\sigma$, it only remains to choose
  the involution $j_\sigma$.
  By Lemma~\ref{lem:hj=jh}, $j_\sigma$ must agree with $j_\rho$ on the image of
  $c^F$ (which is also the image of $h^F$).
%
  Now we have to define $j_\sigma$ outside the image of $c^F$.  Whatever we 
  choose, this assignment will define also $j_c$ (by Lemma~\ref{lem:jimplied}). 
  But the composite of $g$ and $c$ will have $j_{cg}$ equal to $j_c$ (since $g^F$
  has empty complement).  So in order to have $c\circ g = h$, we need $j_c=j_h$,
  so we are forced to take  $j_\sigma = j_h$.
  Now we have constructed a factorisation.  For any two possible factorisations
  (which may differ only by the names of the elements in the sets), there is
  a unique comparison, determined by the bijections already involved.  Hence
  the factorisation is essentially unique.  We have established that the two
  classes of morphisms constitute a factorisation system.
\end{proof}

\begin{blanko}{Commutation law.}\label{BMcommlaw}
  We shall need the special case of factorisation where a composable pair of
  arrows in the `wrong order' is composed and then factorised into a pair in the
  `right order'.  This interchange of order is an example of the general notion
  of commutation law.  The axioms for a commutation law will not be listed,
  since they are automatically satisfied for commutation laws given by a
  factorisation system, as is the case here (and in \ref{prop:commlawcommlaw}).
  Given a compression morphism $h:\tau \to \omega$ and a grafting $k: \omega \to
  \rho$,
  $$\xymatrix{
     \omega \ar[r]^k & \rho  \\
     \tau \ar[u]^h\ar@{..>}[r]_g & \sigma \ar@{..>}[u]_c
  }$$
  the commutation law completes the square with $g$ a grafting and $c$ a
  compression.  As in the proof above, the vertices and flags of $\sigma$ are 
  those of $\tau$, with $g_V$ and $g^F$ the identity maps.  Only the involution
  $j_\sigma$ changes: $g$ grafts the `same' tails as $k$ does---this makes sense
  since $h$ is bijective on tails.  The compression $c$ has $c_V=h_V$ and 
  $c^F=h^F$, modulo the bijections that identity vertices and flags of $\omega$ 
  with vertices and flags of $\rho$.  It other words, it compresses the same 
  `subgraphs' as $h$ does. 
\end{blanko}

\section{Joyal--Kock category of graphs and etale morphisms}

The following definition of graph is from
Joyal--Kock~\cite{Joyal-Kock:0908.2675}.  The definition can be seen as an
natural open-ended version of the Serre definition of graph, and it is also in
the spirit of the polynomial tree formalism in \cite{Kock:0807}.  A directed
variant with the same features was given in \cite{Kock:1407.3744}.

\begin{blanko}{Definition of graph.}\label{def}
  A {\em graph} is a diagram of finite sets
  \begin{equation}\label{eq:graph}
    \xymatrix {
    A \ar@(lu,ld)[]_i & H \ar[l]_s \ar[r]^p & V
    }
  \end{equation}
such that $s$ is injective and $i$ is a fixpoint-free involution. 

The set $V$ is the set of {\em vertices}.  The set $H$ is the set of {\em
half-edges} or {\em flags}: these are pairs consisting of a vertex together with
the germ of an emanating arc.  Finally the set $A$ is the set of {\em arcs},
which can be thought of as an edge with a chosen orientation.  The involution
$i$ is thought of as reversing the orientation.  The map $p$ forgets the
emanating arc.  The map $s$ returns the emanating arc in the direction pointing
away from the vertex.  An {\em edge} is by definition an $i$-orbit, hence
consist of two arcs.  An {\em inner edge} is an $i$-orbit both of whose elements
are in the image of $s$.  A {\em port} is by definition an arc in the complement
of the image of $s$.  The set of ports of a graph is called its {\em interface}.
The {\em local interface} of a vertex $v\in V$ is the set of arcs pointing
towards it, formally $i(s(p^{-1}(v)))$.  A few examples and pictures are given
on the next page.
\end{blanko}

At first look, this definition may not seem so different from the definition
in the introduction, and even a bit more redundant. 
Its merit is the natural notion of morphism:

\begin{blanko}{Etale morphisms.}
  An {\em etale morphism} of graphs is a commutative diagram
\begin{equation}\label{eq:etale}
\xymatrix {
A' \ar@(lu,ld)[] \ar[d] & H' \ar[l] \ar[r] \ar[d]  \drpullback& V' \ar[d] \\
A \ar@(lu,ld)[] & H \ar[l] \ar[r] & V ,
}
\end{equation}
in which the right-hand square is a pullback.
The pullback condition says that each vertex must map to a vertex of
the same valence. 
Let $\Gr$ denote the category of graphs and etale morphisms.  
(Note that \cite{Joyal-Kock:0908.2675} considers only connected graphs.)
An {\em open subgraph (inclusion)} is an etale morphism that is injective 
levelwise.  In this note, all subgraphs considered are open, and we 
suppress the adjective `open'.
\end{blanko}

\begin{blanko}{Unit graph and effective graphs.}
  The {\em unit graph}, denoted $\triv$, is the graph with one edge and no 
  vertices.  It is given by
  $$
  \xymatrix {
  2 \ar@(lu,ld)[] & 0 \ar[l]\ar[r] & 0 .
  }
  $$
  This graph does not have any correspondent in the BM category.  In the
  following we will often have to make exception for it, and more generally
  exclude graphs with isolated edges (an edge is isolated if neither of its
  arcs are in the image of $s$).  A graph is {\em effective} if it is
  nonempty and has no isolated edges.  Note that an effective open subgraph 
  is determined by its vertices.
  
  Unit graphs are, however, essential for the colimit features of $\Gr$, treated
  next.
\end{blanko}

\begin{blanko}{Colimits and monoidal structure.}\label{colimits}
  The category $\Gr$ has sums (disjoint union of graphs), computed levelwise
  (i.e.~by disjoint union of the sets involved).  The empty graph is the neutral
  object for the symmetric monoidal structure given by sums.  The category $\Gr$
  also has other useful colimits, namely certain coequalisers and pushouts over
  unit graphs, expressing precisely grafting of graphs at their ports.  These
  colimits are also computed levelwise.  Precisely, a diagram
  $$
  \xymatrix{ \triv \ar[r]<+3.5pt>^-{f} \ar[r]<-3.5pt>_-{g} & \graf{X} 
  \ar@{..>}[r] & \graf{Y} }
  $$
  has a colimit if $f$ and $g$ map to different edges in $\graf{X}$, such that $f$ 
  maps one of the two arcs in $\triv$ to a port and $g$ maps the
  other arc in $\triv$ to another port: the coequaliser $Y$ is
  interpreted as
  the graph obtained by gluing at the two ports.  If $\graf{X}$ is effective
  then also $\graf{Y}$ is effective.
 
  Similarly, pushouts over a unit graph $\graf{X}\stackrel{f}{\leftarrow}
  \triv \stackrel{g}{\to} \graf{X}'$ exist if the corresponding coequaliser
  exists $\xymatrix{ \triv \ar[r]<+3.5pt>^-{f} \ar[r]<-3.5pt>_-{g} &
  \graf{X}+\graf{X}' \ar@{..>}[r] & \graf{Y} }$.  The possibility of handling
  gluing formally in terms of colimits, and the fact that these colimit are
  computed levelwise, is a crucial feature of the category $\Gr$.  
\end{blanko}

\begin{blanko}{Reduced covers.}
  A family of etale morphisms with codomain $\graf{X}$ is called a {\em 
  covering family} of 
  $\graf{X}$ if it
  is jointly surjective on edges and vertices; this defines the etale topology
  on the category $\Gr$.  A covering family is called {\em reduced} when no member of the
  family could be omitted or replaced by a proper subgraph without spoiling the joint
  surjectivity.  In the absence of isolated edges, this means jointly bijective
  on vertices.
  
%
  
  Since $\Gr$ has sums, one may regard a covering family as a single etale morphism,
  by taking sum of the domains.  We shall do this consistently.  The single 
  etale morphism obtained from a reduced covering family is called a {\em reduced 
  cover}. 
  The quotient 
  morphism of a coequaliser (or pushout) as in \ref{colimits} is always a reduced 
  cover, and every reduced cover can be described as an iterated coequaliser 
  quotient morphism like this.  In particular (still in the absence of isolated 
  edges), the reduced covers of a given graph $\graf{X}$ form the
  boolean lattice of subsets of the set of inner edges of $\graf{X}$.
\end{blanko}

\begin{prop}\label{prop:JKrc=BMgr}
  Let $\kat{BM}_{\text{\rm gr}}\subset \kat{BM}$ denote the wide subcategory
  with only graftings as morphisms, and let $\Grwie_{\text{\rm rc}}\subset\Gr$
  denote the wide subcategory whose objects are the graphs without isolated
  edges, and whose morphisms are the reduced covers.  There is a canonical
  equivalence
  $$
  \Phi_1 : \kat{BM}_{\text{\rm gr}} \isopil \Grwie_{\text{\rm rc}} .
  $$
\end{prop}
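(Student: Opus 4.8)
The plan is to construct the functor $\Phi_1$ explicitly on objects and morphisms, then verify it is fully faithful and essentially surjective. The central task is to reconcile the two very different data structures: a BM graph carries flags $F$ with an involution $j$ whose fixpoints are tails, whereas a JK graph carries arcs $A$, half-edges $H$, and a map $s:H\to A$ with ports living in the complement of $s(H)$. The obvious dictionary is to send a BM graph $(V,F,\partial,j)$ to the JK graph with the same vertex set $V$, with $H := F$ (flags become half-edges), with $p := \partial$, and with the arc set $A$ built by doubling: for each tail we create one new arc to serve as its port, and for each inner edge (a two-element $j$-orbit of non-tails) we keep the two arcs, so that $s$ embeds $F$ into $A$ and $i$ is the involution swapping paired arcs. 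Conversely, given an effective JK graph, I would read off $V$, set $F:=H$, $\partial:=p$, and define $j$ on $F$ by restricting $i\circ s$-style pairing to inner edges and declaring ports to correspond to fixpoints (tails). The condition that the target JK graphs be \emph{effective} (no isolated edges) is exactly what makes this correspondence well defined and matches the restriction to graphs without isolated edges in $\Grwie_{\text{\rm rc}}$.

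First I would check this object assignment is an equivalence on objects: the only subtlety is that an edge in the BM sense (a $j$-orbit of size two among non-tails) must become an inner edge (both arcs in the image of $s$), while a tail becomes a port; effectiveness guarantees no stray $i$-orbits with neither arc in $s(H)$. By Lemma~\ref{lem:jimplied} a grafting is determined by $(h_V,h^F)$ alone, and since a grafting has both maps bijective, the underlying sets and $\partial$ are preserved and only the involution changes by clutching together some tails into edges. Under the dictionary, clutching two tails into an edge is precisely the operation of identifying two ports to form an inner edge, which is the coequaliser/pushout over the unit graph $\triv$ described in \ref{colimits}, and the resulting quotient morphism is a reduced cover. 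So I would define $\Phi_1$ on morphisms by sending a grafting to the associated reduced cover, using the description (stated in the Reduced covers paragraph) that every reduced cover arises as an iterated coequaliser quotient and corresponds to a choice of subset of inner edges to glue.

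For fullness and faithfulness, the key identification is that both hom-sets are governed by the same combinatorial datum. On the BM side, a grafting $\tau\to\sigma$ is, up to the forced bijections on $V$ and $F$, a choice of how to pair some tails of $\tau$ into edges of $\sigma$, subject to axiom~(iv). On the JK side, by the final sentence of the Reduced covers paragraph, reduced covers \emph{of} a fixed graph form the boolean lattice of subsets of its inner edges; dually, the reduced covers \emph{out of} $\graf{T}$ correspond to choices of which ports of $\graf{T}$ to glue pairwise. I would set up an explicit bijection between these two sets of data and check it respects composition, which on the JK side is composition of reduced covers (iterated gluing) and on the BM side is the grafting composition law from the Definition. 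The main obstacle I anticipate is bookkeeping rather than conceptual: I must track carefully how the doubling of tails into port-arcs interacts with the direction reversal of $h^F$ (contravariant on flags) versus the covariant etale morphisms, and confirm that a pairing of two tails on the BM side really does determine a \emph{single} inner edge (not an orientation choice) on the JK side, so that the involution $i$ is matched correctly and no spurious automorphisms are introduced. Once the bijection on morphisms is pinned down and functoriality is checked against both composition laws, essential surjectivity on objects (already noted) completes the equivalence.
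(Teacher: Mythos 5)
Your proposal is correct and follows essentially the same route as the paper: the identical object dictionary ($H:=F$, $A:=F+T$ with tails doubled into ports, and the inverse construction reading $j$ off the inner edges), and the same forced extension of $(h^F)^{-1}$ to arcs for morphisms. The only cosmetic difference is that you verify full faithfulness by matching the combinatorial data of tail-pairings against the coequaliser/boolean-lattice description of reduced covers, whereas the paper simply writes down mutually inverse maps on hom-sets directly.
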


\begin{proof}
  On objects (cf.~also~\cite[15.3]{Batanin-Berger:1305.0086}): given a BM graph
  $(V,F,\partial,j)$, define a JK graph \eqref{eq:graph} with the same $V$,
  with $H:=F$ and $A:= F+T$ (the disjoint union of the set of all flags and the
  set of all tails), with $s$ the sum inclusion and $p:=\partial$.  The
  fixpoint-free involution $i:A\to A$ is given by $j_\tau$ on the fixpoint-free
  part of $F$, and interchanges each fixed flag in $F$ with its corresponding copy in
  $T$.  The part $T$ plays the role of ports.  It is clear that there are no
  isolated edges.
  Conversely, given a JK graph without isolated edges, take the same $V$,
  put $F:=H$, and define an involution $j: F \to F$ by $j(f)=s^{-1}isf$ if
  $f$ is part of an inner edge, and $j(f)=f$ otherwise.
    
  On morphisms: given a grafting $h:\tau\to \sigma$ between BM graphs, then
  since $h^F: F_\sigma\to F_\tau$ is bijective, we can use its inverse to define
  the component on half-edges between the corresponding JK graphs.  The required
  map on arcs $A_\tau \to A_\sigma$ has to be a map $F_\tau + T_\tau
  \to F_\sigma + T_\sigma$.  For the left-hand square of \eqref{eq:etale}
  to commute, we are forced
  to take $f \mapsto (h^F)^{-1}(f)$ for $f\in F_\tau$.  For the map to be
  compatible with the fixpoint-free involutions $i$, we are also forced to take
  $f \mapsto j_\sigma(h^F)^{-1}(f)$ for $f\in T_\tau$.
  Conversely, given a reduced cover between JK graphs without isolated edges,
  since it is bijective on vertices, it is also bijective on half-edges (by the
  pullback condition), so we can obtain a grafting between the corresponding BM
  graphs by taking the inverse on flags, and simply forgetting the component on
  arcs.
\end{proof}

\section{Graphical species, monad, and Kleisli category}

\begin{blanko}{Elementary graphs.}
  An {\em elementary graph} is a connected graph without inner edges.
  (Connected means: nonempty and not the sum of smaller graphs.)
  Here are the first few elementary graphs:
  \begin{center}\begin{texdraw}
  \lvec (0 20)
  \move (20 10) \onedot
  \move (40 10) \onedot \lvec (40 21)
  \move (60 10) \onedot \lvec (65 20) \move (60 10) \lvec (65 0)
  \move (80 10) \onedot \lvec (75 20) \move (80 10) \lvec (75 0) \move (80 10) 
  \lvec (90 10)
  \move (105 10) \onedot \move (97 18) \lvec (113 2)
  \move (97 2) \lvec (113 18)
  \end{texdraw}\end{center}
  The first one is the unit graph already mentioned;
  the remaining ones are the {\em corollas} $\graf{n}$, given by 
  $$
  \xymatrix  @!0 @C=11pt  {
  \ar@(lu,ld)[] \phantom{n}&n+n'&&&&& \ar[lllll] n' \ar[rrrr]&&&& 1,
  }
  $$
  where $n$ and $n'$ are finite sets with a bijection $n \isopil n'$ defining 
  the involution.  (The set of ports of $\graf{n}$ is thus $n$.)
  Let $\elGr$ denote the full subcategory of
  $\Gr$ consisting of the elementary graphs. 
%
\end{blanko}

It is easy to check that every graph $G$ is canonically a colimit in $\Gr$
of its elementary subgraphs.  The indexing category is the category of elements 
$\el(\graf{G}) := \elGr\comma \graf{G}$.
 The canonical colimit decomposition of a graph is also a 
canonical cover, and it follows readily that there is an 
equivalence of categories between presheaves on $\elGr$ and sheaves on $\Gr$, 
for the etale topology:
$$
\PrSh(\elGr) \isopil \Sh(\Gr) .
$$

\begin{blanko}{Graphical species.}
  A presheaf $F:\elGr\op\to\Set$ is called a {\em graphical species}
  \cite{Joyal-Kock:0908.2675}; its value on $\graf{n}$ is denoted $F[\graf{n}]$.
  Explicitly, a graphical species is given by an involutive set $C = F[\triv]$
  (of `colours'), and for each $n\in \N$ a set $F[\graf{n}]$ (of `operations')
  with $2n$ projections to $C$, permuted by a $\mathfrak S_n$-action on
  $F[\graf{n}]$ and by the involution on $C$.

  Each graphical species $F$ defines a notion of {\em $F$-graph}: these are 
  graphs decorated on edges by the colours of $F$ and on vertices by the 
  operations of $F$.  More formally, the category of $F$-graphs is the comma
  category $\Gr\comma F$.

  The equivalence $\PrSh(\elGr) \simeq \Sh(\Gr)$ means that every graphical
  species can be evaluated not only on elementary graphs but on all graphs: if
  $F$ is a graphical species and $\graf{G}$ is a graph, then
  $$
  F[\graf{G}] = \lim_{\graf{E}\in\el(\graf{G})} F[\graf{E}]   ,
  $$
  where $\graf{E}$ runs over the category of elements of $\graf{G}$, i.e.~all
  the elementary subgraphs of $\graf{G}$ and the way they are glued together to
  give $\graf{G}$. 
\end{blanko}

\begin{blanko}{The idea of generating more graph morphisms by a monad.}
  The etale morphisms, and in particular the injective ones, serve as the geometric
  fabric supporting further algebraic features, essentially graph substitution,
  which will now be generated by a monad defined in terms of colimits.
  The new morphisms generated allow for refining a graph by substituting a foreign
  graph into a vertex. 
  For the notions of monad and Kleisli category we refer to Mac
  Lane~\cite{MacLane:categories}, Ch.VI.

  Generating new morphisms by means of a monad is an important construction in the
  combinatorics of higher structures \cite{Weber:TAC13}.  As important examples,
  the simplex category $\Delta$ arises in this way from
  the free-category monad, Joyal's cell category $\Theta$ from the
  free-strict-$\omega$-category monad \cite{Berger:Adv}, the Moerdijk--Weiss
  category $\Omega$ of trees from the free-operad monad \cite{Kock:0807}.

  The following description of the monad and its restricted Kleisli category is
  essentially that of Joyal--Kock~\cite{Joyal-Kock:0908.2675}, but with 
  modified conditions imposed on $\graf{n}$-graphs.  These conditions control
  what kind of
  graphs are allowed to be substituted into a vertex, and it is modified in
  order to accommodate the Borisov--Manin mergers.  The adjustments are
  explained in \ref{adjusts} below.
  Monads of this flavour---defined as a colimit of limits---abound in the
  literature on operads and related structures.  The general theory of such
  monads is perhaps best cast in a polynomial formalism, for which the 
  current state-of-the-art is Batanin--Berger~\cite{Batanin-Berger:1305.0086},
  but going in this direction is beyond the scope of this note.
\end{blanko}

\begin{blanko}{$\graf{n}$-graphs.}
  Let $n$ be a finite set.  An {\em $\graf{n}$-graph} is a graph $\graf{G}$
  whose set of ports is $n$ (or more precisely, equipped with a bijection with
  the ports of $\graf{n}$).  A morphism of $\graf{n}$-graphs is an isomorphism
  leaving the set of ports fixed (or more precisely, compatible with the
  specified bijections).  We shall need only {\em effective $\graf{n}$-graphs}
  and their isomorphisms, forming a groupoid denoted
  $\graf{n}\kat{-Gr}_{\iso}$.
\end{blanko}

\begin{blanko}{Monad \cite{Joyal-Kock:0908.2675}.}
  Define a monad by the assignment
  \begin{align*}
     \PrSh(\elGr) \ \longrightarrow & \ \PrSh(\elGr) \\  
     F \ \longmapsto & \ \ov F ,
  \end{align*}
  where $\ov F$ is the graphical species given by $\ov F[\triv] :=  
  F[\triv]$ and
  $$
  \ov F[\graf{n}] \ :=  \ \colim_{\graf{G} \in \graf{n}\kat{-Gr}_{\iso}} 
  F[\graf{G}] \ = \ \sum_{\graf{G}\in \pi_0(\graf{n}\kat{-Gr}_{\iso})}  
  \frac{F[\graf{G}]}{\Aut_{\graf{n}}(\graf{G})} \ 
  = \ \pi_0 \big( \graf{n}\kat{-Gr}_{\iso} \comma F \big) .
  $$

  The monad multiplication is described as follows.
  $\ov F[\graf{n}]$ is the set of isomorphism classes of effective
  $\graf{n}$-$F$-graphs: it is the set of ways to decorate effective
  $\graf{n}$-graphs by the graphical species $F$.  Now $\ov { \ov F} [\graf{n}]$
  is the set of effective $\graf{n}$-graphs decorated by effective $F$-graphs:
  this means that each vertex is decorated by an $F$-graph with matching
  interface.  We can draw each vertex as a circle with the decorating $F$-graph
  inside, and the monad multiplication then consists in erasing these circles,
  turning a graph with vertices decorated by $F$-graphs into a single $F$-graph.
  More formally, we can use the $\graf{n}$-graph as indexing a diagram of
  $F$-graphs, and then take the colimit in $\Gr$.  In detail, the
  groupoid $\Gr_{\iso}\comma \ov F$ has as objects pairs
  $(\graf{R},\phi)$ where $\graf{R}$ is a graph, and
  $\phi: \Hom( -, \graf{R}) \to \ov F$ is a
  natural transformation.  Equivalently we can regard $\phi$ as a functor
  $\el(\graf{R}) \to \elGr\comma \ov F$.  Now there is also a canonical functor
  $\elGr\comma \ov F \to \Gr\comma F$, which takes unit graphs to unit graphs,
  and takes a corolla decorated by an $F$-graph to that same $F$-graph.
  We take the colimit of the composite functor $\el(\graf{R}) \to \elGr\comma
  \ov F \to \Gr\comma F$ to obtain a single $F$-graph.  The whole construction
  defines a functor
  $$
  \Gr_{\iso}\comma \ov F \to \Gr_{\iso}\comma F ,
  $$
  whose fibrewise $\pi_0$ defines the monad multiplication.
  The unit for the monad interprets an $F$-corolla as an $F$-graph.
  (More details and a careful proof of associativity can be found in
  \cite{Kock:1407.3744}, in the case of directed graphs.)
\end{blanko}

\begin{blanko}{`Compact coloured props'.}
  Let $\kat{CCP}$ denote the category of algebras for the monad $F\mapsto \ov
  F$,  tentatively called {\em compact coloured props}
  (although rightly the word `effective' ought to be part of the name).
  Hence a compact coloured
  prop is a graphical species $F:\elGr\op\to\Set$ equipped with a
  structure map $\ov F \to F$: it amounts to a rule which for any
  effective $\graf{n}$-graph $\graf{G}\in \graf{n}\kat{-Gr}_{\iso}$ gives a map
  $F[\graf{G}] \to F[\graf{n}]$, i.e.~a way of constructing a single operation
  from a whole graph of them.  This rule satisfies an associativity condition
  (cf.~\cite{MacLane:categories}, Ch.VI),
  amounting roughly to independence of
  the different ways of breaking the computation into steps.
\end{blanko}

\begin{blanko}{Choices involved in the definition of the CCP monad.}\label{adjusts}
  By adjusting the conditions imposed on $\graf{n}$-graphs, one can get
  different monads.  The original choice of Joyal--Kock~\cite{Kock:1407.3744} is
  to allow only connected $\graf{n}$-graphs.  This excludes the empty graph, but
  allows the unit graph, and leads to a notion of compact symmetric
  multicategory, essentially a coloured version of the modular operads of
  Getzler and Kapranov~\cite{Getzler-Kapranov:9408}.
  
  For the present purposes the connectedness requirement is given up, in order
  to be general enough to model also prop-like structures, and not just
  operad-like structures.  Specifically it is the availability of non-connected
  graphs in $\graf{n}\kat{-Gr}_{\iso}$ that leads to morphisms corresponding to
  the mergers of Borisov--Manin.  On the other hand we choose not to allow the
  empty graph in $\graf{0}\kat{-Gr}_{\iso}$.  This choice is merely taken in
  order to agree with Borisov--Manin.  (Actually the empty graph is a perfectly
  valid $\graf{0}$-graph, and it could be included in $\graf{0}\kat{-Gr}_{\iso}$
  without problems.  It would actually make sense to include it, so as to allow
  an isolated vertex to be refined into the empty graph, in turn a substitution
  relevant for the graphical modelling of props, where it accounts for the
  neutral element in the monoid of $(0,0)$-operations, not guaranteed in the
  formalisms of \cite{Borisov-Manin:0609748} or \cite{Kaufmann-Ward:1312.1269}.
  See also Remark~10.5 of Batanin--Berger~\cite{Batanin-Berger:1305.0086} for
  related discussion.)
%
%
%
%
  Finally we choose to exclude the unit graph $\triv$ from
  $\graf{2}\kat{-Gr}_{\iso}$, and with it all graphs with isolated edges.  A
  superficial reason is to arrive precisely at the BM category, where unit
  graphs are not allowed.  But a deeper reason is that
  Proposition~\ref{prop:pushout} below would not be true if we allowed $\triv$
  as an object in $\graf{2}\kat{-Gr}_{\iso}$.
\end{blanko}

\begin{blanko}{Kleisli category.}
  Recall that the Kleisli category (see Mac
  Lane~\cite[\S\,VI.5]{MacLane:categories}) is the full subcategory of the
  category of algebras spanned by the free algebras.  By adjunction, this can
  also be described as the category whose objects are graphical species, and
  where a morphism $F \to Y$ is a morphism of graphical species $F \to \ov Y$.
  We are interested in the full subcategory $\widetilde{\Gr}$ of the Kleisli
  category given by restricting to graphs, via the fully faithful embedding
  $\Gr\to \PrSh(\elGr)$.  In \cite{Joyal-Kock:0908.2675} this meant restricting
  to connected graphs (including the unit graph).  In the present modified
  version we allow non-connected graphs, but exclude graphs with isolated edges,
  such as most importantly the unit graph.  We do allow the empty graph as
  object in the restricted Kleisli category $\widetilde \Gr$ (although we have
  excluded it from $\graf{0}\kat{-Gr}_{\iso}$).
    
  In intuitive terms, a morphism in this restricted Kleisli category $\widetilde
  \Gr$ is a map that sends edges to edges and sends vertices to effective
  subgraphs with the same interface, and respects the flag incidences.  More
  formally, a Kleisli morphism from graph $\graf{R}$ to graph $\graf{Y}$ is a
  morphism of graphical species $\graf{R}\to \ov{\graf{Y}}$.  This can also be
  described as a functor $\el(\graf{R}) \to \Gr\comma \graf{Y}$, with the
  property that each edge element of $\el(\graf{R})$ is sent to a unit graph
  (over $\graf{Y}$) and each $\graf{n}$-vertex element $x$ is sent to an
  effective $\graf{n}$-graph $\graf{S}_x$ (over $\graf{Y}$).  (The vertex
  condition involves a specific bijection between the interface of the
  $\graf{R}$-corolla at $x$ and the interface of the graph $\graf{S}_x$.)  The
  colimit of this diagram exists in $\Gr\comma \graf{Y}$, and is denoted $\graf{S}$: it is the graph
  obtained by gluing together all the graphs $\graf{S}_x$ according to the same
  `recipe' $\el(\graf{R})$ that serves to assemble $\graf{R}$ from its elements.
  The graph $\graf{S}$ is etale over $\graf{Y}$ by construction. 
  (The formal
  colimit description of the various notions involved has been detailed in
  \cite{Kock:1407.3744}, in the case of directed graphs.)
  
  There is a subtlety here: for all colimit computations we work in $\Gr$ where
  the unit graph is a crucial element.  However, we only glue effective graphs,
  and the result is always effective again, so that both input and output to the
  computation are objects in $\widetilde\Gr$.
\end{blanko}

The following lemma is easy, but important.
\begin{lemma}\label{lem:epi}
  Every cover morphism $\graf{R}\to \graf{R}'$ 
  is an epimorphism in the Kleisli category.  That is, the extension
    $$\xymatrix{
     \el(\graf{R}) \ar[r]\ar[rd] & \el(\graf{R}') \ar@{..>}[d] \\
       & \Gr\comma \graf{Y} 
  }$$
  is unique if it exists.
\end{lemma}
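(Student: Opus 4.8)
The plan is to prove that a cover morphism $\graf{R}\to\graf{R}'$ is epic in the Kleisli category by exploiting the fact, recalled just before \ref{lem:epi}, that every graph is canonically the colimit of its elementary subgraphs, i.e.\ that the category of elements $\el(\graf{R})$ is a colimit recipe for $\graf{R}$. Since a Kleisli morphism out of $\graf{R}'$ is, by the description in the Kleisli category paragraph, the same thing as a functor $\el(\graf{R}')\to\Gr\comma\graf{Y}$ satisfying the edge/vertex conditions, epimorphy amounts exactly to the uniqueness of the dotted extension in the displayed triangle. So the whole statement reduces to a purely combinatorial fact about the functor $\el(\graf{R})\to\el(\graf{R}')$ induced by a cover: that it is \emph{surjective on objects up to isomorphism} (i.e.\ essentially surjective at the level of elements). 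If every element of $\graf{R}'$ is hit by an element of $\graf{R}$, then any two extensions must agree on a generating set of $\el(\graf{R}')$, and since a functor out of a category of elements is determined by its values on objects and the incidence morphisms, the extension is forced.

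First I would unwind the definitions to reduce epimorphy to the uniqueness statement in the diagram, which the lemma already spells out; this is essentially immediate from the Kleisli-as-Kleisli-triangle description. Second, I would show that the functor $\el(\graf{R})\to\el(\graf{R}')$ is surjective on elements. A cover is, by definition, jointly surjective on vertices and edges; every elementary subgraph of $\graf{R}'$ is either a corolla (determined by a vertex) or a unit subgraph (an inner edge, hence determined by an edge), so joint surjectivity on vertices and edges is precisely joint surjectivity on the two types of elements. Thus every object of $\el(\graf{R}')$ is in the image, at least up to the relevant identifications. Third, I would argue that this surjectivity forces uniqueness of the extension: two functors $\el(\graf{R}')\to\Gr\comma\graf{Y}$ agreeing after precomposition with $\el(\graf{R})\to\el(\graf{R}')$ must agree on every object of $\el(\graf{R}')$ (by surjectivity) and on every morphism (since the morphisms of $\el(\graf{R}')$ are incidences of ports into corollas, which are already determined by the behaviour on objects together with the compatibility forced by naturality).

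The main obstacle I anticipate is handling the elements corresponding to \emph{ports} and \emph{edges} carefully. A cover is bijective on vertices but in general only surjective (not injective) on edges, so $\el(\graf{R})\to\el(\graf{R}')$ need not be injective; what matters is only surjectivity, so I must check that no object of $\el(\graf{R}')$ slips through---in particular that every inner-edge element of $\graf{R}'$ really is hit, which is where the joint-surjectivity-on-edges clause is used. A secondary subtlety is that $\el(\graf{R}')$ also contains elements for the individual arcs/ports of each corolla (the incidences), and I must confirm these are forced too; but since such elements map into $\el(\graf{R})$ along with their ambient corolla via the etale (indeed pullback) condition on the cover, their images under any extension are pinned down by the image of the corolla they sit inside. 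Once these bookkeeping points are settled, uniqueness of the extension---and hence the epimorphism property---follows formally, so the lemma is genuinely as easy as advertised, the only real content being the translation of ``cover'' into ``surjective on elements.''
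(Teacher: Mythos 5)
Your proposal is correct and follows essentially the same route as the paper, whose entire proof is that the functor $\el(\graf{R})\to\el(\graf{R}')$ is surjective on objects (by joint surjectivity of the cover on vertices and edges) and hence an epi-functor. Your additional care with the incidence morphisms of $\el(\graf{R}')$ is a reasonable expansion of the paper's one-line appeal to ``epi-functor'' rather than a different argument.
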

\begin{proof}
  This follows since the functor $\el(\graf{R}) \to 
  \el(\graf{R}')$
  is surjective on objects, and in particular is an epi-functor.
\end{proof}

\begin{blanko}{Generic morphisms and generic factorisations.}
  We say that a Kleisli morphism $\graf{R} \to \graf{Y}$ as above is {\em generic} if
  the colimit graph $\graf{S}$ is terminal in $\Gr\comma \graf{Y}$, or,
  equivalently, if the etale morphism $\graf{S} \to \graf{Y}$ is invertible.  We
  shall see in a moment that generic morphism have a natural interpretation
  as refining the vertices into effective graphs with matching interfaces.
  
  It follows from the previous descriptions that every Kleisli morphism $\graf{R} \to
  \graf{Y}$ has a canonical factorisation
  into a generic morphism $\graf{R}\to\graf{S}$ followed by a free morphism
  $\graf{S} \to \graf{Y}$.  `Free' just means that it is the image of a etale
  morphism under the monad.  Such {\em generic factorisations} can be deduced
  formally from properties of the monad~\cite{Weber:TAC13}.  (In fact, these
  abstract properties of the monad allows one to characterise the graphs among
  all graphical species as those objects that can arise as the middle object of
  a generic factorisation of a Kleisli morphism from a representable (i.e.~an
  elementary graph.))
\end{blanko}


\section{Generic morphisms (refinements) versus compressions}

\begin{blanko}{Generic morphisms as graph refinements.}
  From the general description of the Kleisli morphisms, we see that generic
  morphisms out of a graph $\graf{R}$ are given by collections of
  effective graphs $\graf{S}_x$ indexed by the vertices of $\graf{R}$, which are
  then glued together in the same way as the corollas in $\graf{R}$
  are glued together to give $\graf{R}$. Furthermore, each $\graf{S}_x$
  has the same interface as the corolla at $x$.  Geometrically,
  what the generic morphism does is therefore to {\em refine} each vertex in $\graf{R}$
  into an effective graph with the same interface.
\end{blanko}

\begin{blanko}{Duality between generic morphisms and reduced covering families.}
  Each generic morphism with codomain $\graf{S}$ gives rise to a reduced 
  covering family
  of $\graf{S}$, namely $\big(\graf{S}_x \to \graf{S} \mid x\in V
  \big)$ corresponding to the quotient morphism $\sum_x \graf{S}_x \to \graf{S}$
  in the colimit description (it is reduced since the graphs $\graf{S}_x$ are
  effective).  Conversely, every reduced covering family $\big(\graf{S}_x \to \graf{S}
  \mid x\in V \big)$
  is the quotient morphism of a canonical coequaliser diagram of graphs of form
  $$
  \xymatrix{ \sum_e \triv \phantom{x} \ar[r]<+3.5pt> \ar[r]<-3.5pt> & \phantom{!}\sum_x \graf{S}_x }
  $$
  where $e$ runs over the set of edges of $\graf{S}$ hit twice by the cover.
  Since the cover is reduced, no edge is hit more than twice, and all the
  $\graf{S}_x$ are effective (but not necessarily connected).
  The same shape of coequaliser diagram can be formed
  with a corolla $\graf{n}_x$ in place of each $\graf{S}_x$:
  $$
  \xymatrix{ \sum_e \triv \phantom{x} \ar[r]<+3.5pt> \ar[r]<-3.5pt> & 
  \phantom{!} \sum_x \graf{n}_x   .}
  $$
  The colimit of this diagram is a graph $\graf{R}$ with a vertex for each $x\in
  V$.
  The assignment $x\mapsto \graf{S}_x$ now defines a generic morphism $\graf{R}
  \to \graf{S}$.  It is readily seen that these two constructions are
  essentially inverse to each other.  This `duality' between reduced covering
  families and generic morphisms was first observed by Berger~\cite{Berger:Adv}
  in the cases of $\Delta$ and $\Theta$; see \cite[Remark 2.3.2]{Kock:0807} for
  the case of trees.
\end{blanko}

\begin{blanko}{Diagrammatic description of generic morphisms.}
  It is a key feature of the JK formalism that etale morphisms are given by
  straightforward diagrams \eqref{eq:etale}.  A similar diagrammatic description
  is possible for refinement morphisms.  Namely, a refinement morphism $\graf{R}
  \to \graf{S}$ is specified unambiguously by a diagram
  \begin{equation}\label{eq:Sub}
  \xymatrix {
  A_{\graf{R}} \ar@(lu,ld)[] \ar[d] & H_{\graf{R}} \ar[l] \ar[r] \ar[d]  \drpullback& V_{\graf{R}} \ar[d] \\
  A_{\graf{S}} \ar@(lu,ld)[] & \Sub'(\graf{S}) \ar[l] \ar[r] & \Sub(\graf{S}) ,
  }
  \end{equation}
  in which the right-hand square is a pullback, and required to be bijective on 
  ports.  The bottom diagram is not a graph, only a graphical species.
  Here
  $\Sub(\graf{S})$ is the set of effective subgraphs of $\graf{S}$, and
  $\Sub'(\graf{S})$ is the set of effective subgraphs of $\graf{S}$ with a specified 
  outer flag.  The {\em outer flags} of an (effective) subgraph are $i$ of its ports.  
  Note that applying the involution ensures that the notion agrees with the flags of
  a vertex considered as a one-vertex subgraph, and let us define {\em ports} of
  a diagram like the bottom row as those arcs that are not $s$ of a
  flag of any effective subgraph.  It is clear that the ports of this bottom row
  are just the ports of $\graf{S}$.
  
  Indeed, given a generic morphism, the assignment $x\mapsto \graf{S}_x$ defines
  the right-most vertical map.  The middle vertical map is canonically 
  defined since each
  flag belongs to a vertex $x$, and the data in a generic morphism includes
  bijections between the flags of $x$ and the flags of $\graf{S}_x$.
  Commutativity of the left-hand square and with the involutions expresses
  precisely that the gluing data is the same for $\graf{S}$ from the
  $\graf{S}_x$ as for $\graf{R}$ from its vertex subgraphs.  From this it is
  also clear that the left-hand map induces a bijection on ports.  Conversely,
  given such a diagram, the assignment $x\mapsto \graf{S}_x$ is already given by
  the right-hand map, and it has the correct interfaces because the second square
  is a pullback.  It is routine to check that the two constructions are inverse
  to each other.
\end{blanko}

\begin{prop}\label{prop:JKgen=BMcomp}
  Let $\kat{BM}_{\text{\rm comp}} \subset \kat{BM}$ denote the wide subcategory with
  only the compression morphisms, and let $\widetilde\Gr_{\text{\rm gen}}\subset 
  \widetilde\Gr$ denote the wide subcategory with only generic morphisms.
  There is a canonical equivalence $$\Phi_2: \kat{BM}_{\text{\rm comp}} \isopil 
  \widetilde\Gr_{\text{\rm gen}}\op.$$
\end{prop}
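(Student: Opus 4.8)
The plan is to construct the equivalence $\Phi_2$ explicitly on objects and on morphisms, and to verify that it is essentially surjective, fully faithful, and contravariantly functorial. Since both subcategories are wide (same objects as $\kat{BM}$ and $\widetilde\Gr$ respectively, modulo the correspondence of objects already implicit from Proposition~\ref{prop:JKrc=BMgr}), the object correspondence should be inherited from $\Phi_1$: a BM graph $(V,F,\partial,j)$ corresponds to the JK graph with $H:=F$ and $A:=F+T$, as in the proof of Proposition~\ref{prop:JKrc=BMgr}. The crux is the morphism correspondence, and here I expect to lean heavily on the diagrammatic description \eqref{eq:Sub} of generic morphisms together with Lemmas~\ref{lem:hj=jh} and \ref{lem:boundarypres}.

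First I would take a compression $h:\sigma\to\rho$ in $\kat{BM}_{\text{\rm comp}}$ and produce a generic morphism $\Phi_2(h):\graf{R}\to\graf{S}$ going the \emph{opposite} way, i.e.\ with $\graf{S}$ the JK graph corresponding to $\sigma$ and $\graf{R}$ the one corresponding to $\rho$. The recipe is the natural one: each vertex $x\in V_\rho$ should be refined into the effective subgraph $\graf{S}_x$ of $\graf{S}$ whose vertices are $h_V^{-1}(x)$ (together with the edges contracted by $h$ among them). This is exactly the ``subgraph compressed to a vertex'' of the Borisov--Manin picture. The key structural input is Lemma~\ref{lem:boundarypres}, which guarantees that $\graf{S}_x$ has the \emph{same interface} as the corolla at $x$ in $\graf{R}$ --- precisely the condition required for a generic morphism. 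I would check that gluing the $\graf{S}_x$ back together reproduces $\graf{S}$ (equivalently, that $\graf{R}=\colim_x\graf{n}_x$ in the duality of the preceding paragraph), using Lemma~\ref{lem:hj=jh} to see that the contracted/non-contracted flags match up with inner edges and ports correctly, and that $h^F$ commuting with the involutions is exactly the commutativity of the left-hand square in \eqref{eq:Sub}. Conversely, from a generic morphism $\graf{R}\to\graf{S}$ I would read off a compression $\sigma\to\rho$: set $h_V$ to send each vertex of $\graf{S}_x$ to $x$, set $h^F$ to be the inclusion of the flags of $\graf{R}$'s corollas as the outer flags of the $\graf{S}_x$, and recover $j_h$ via Lemma~\ref{lem:jimplied} (it is forced). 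Checking axioms (i)--(iv) of the BM definition should reduce to unwinding effectiveness and the pullback condition in \eqref{eq:Sub}.

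The genuinely substantive step is \emph{functoriality with the direction reversed}: I must show that $\Phi_2(c\circ g)=\Phi_2(g)\circ\Phi_2(c)$ in $\widetilde\Gr_{\text{\rm gen}}$, where composition of compressions is the BM rule ($h^F=g^F\circ c^F$, $h_V=c_V\circ g_V$, with $j_h$ assembled from $j_g,j_c$ on the two pieces of the complement), and composition of generic morphisms is composition in the Kleisli category. This is where I expect the main obstacle to lie, because it requires matching the BM prescription for $j_h$ on the disjoint-union complement against the monad multiplication, i.e.\ the ``erasing circles'' colimit that glues an $F$-graph-decorated graph into a single graph. Concretely, a vertex of the target $\rho$ is refined by $c$ into a subgraph, each of whose vertices is in turn refined by $g$; the associativity of the monad (the ``independence of ways of breaking the computation into steps'' cited in the CCP paragraph) is exactly what guarantees that the two-step refinement equals the one-step refinement along $h$. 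I would reduce functoriality to this associativity, verifying separately that the interfaces match at each stage (again via Lemma~\ref{lem:boundarypres}) and that the edges contracted by $h$ are partitioned correctly into those contracted by $g$ (inner edges appearing inside the first-stage subgraphs) and those contracted by $c$ (inner edges between them), which is precisely the BM split of the complement of $h^F$.

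Finally I would assemble these into the statement that $\Phi_2$ is an equivalence. Full faithfulness follows because the two constructions of the previous paragraphs are mutually inverse up to the canonical isomorphisms already fixed on objects; essential surjectivity is immediate since the subcategories are wide and every generic morphism, by the duality paragraph, arises from a collection of effective subgraphs with matching interfaces, which is exactly the data of a compression. The only delicacy on the object side is the exclusion of the unit graph and of graphs with isolated edges, flagged in \ref{adjusts}; these are exactly the objects without a BM correspondent, so restricting to effective graphs makes the object correspondence a bijection and the equivalence well-defined. I therefore expect the write-up to be short on objects, routine (but illustrative) on the two morphism constructions, and concentrated at the functoriality step, where monad associativity does the real work.
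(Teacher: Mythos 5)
Your proposal is correct and follows essentially the same route as the paper: the same object correspondence inherited from $\Phi_1$, the same assignment of the effective subgraph $\graf{S}_x$ spanned by $h_V^{-1}(x)$ to each vertex $x$, the same use of Lemma~\ref{lem:boundarypres} for the interface matching and of Lemmas~\ref{lem:hj=jh} and \ref{lem:jimplied} for the involutions, and the same inverse construction via the diagram \eqref{eq:Sub}. The only divergence is one of emphasis: you single out contravariant functoriality (via monad associativity) as the substantive step, whereas the paper concentrates on verifying the BM axioms for the two mutually inverse morphism-level constructions and leaves functoriality as routine.
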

\begin{proof}
  We define functors in both directions.
  
  Given a generic morphisms $\graf{R} \to \graf{S}$ in terms of
  a diagram \eqref{eq:Sub},  let $\rho$ and $\sigma$ be the corresponding BM
  graphs, as in the object-part of the proof of \ref{prop:JKrc=BMgr}.
  We need to construct a compression morphism $h:\sigma\to\rho$.  For
  each vertex $x\in V_{\graf{R}}$, we have an effective subgraph $\graf{S}_x \subset
  \graf{S}$,
  and we have a reduced cover $\sum_x \graf{S}_x \to \graf{S}$.  Since it is
  bijective on vertices, every vertex $v$ of $\graf{S}$ belongs to a unique
  subgraph $\graf{S}_x$.  We define the map $h_V: V_\sigma \to V_\rho$ by
  sending $v$ to $x$.  It is surjective as required since each subgraph
  $\graf{S}_x$ has at least one vertex by the effectivity assumption.  Now we
  define $h^F: F_\rho \to F_\sigma$ to be the composite $H_{\graf{R}} \to
  \Sub'(\graf{S}) \to H_{\graf{S}}$.  This map is injective as a consequence of
  the fact that the graphs $\graf{S}_x$ are effective.  Indeed, each flag in
  $\graf{R}$ is {\em either} $i$ of a port, in which case its image is too, and
  generic morphisms are bijective on ports, {\em or} it is part of an inner edge
  of $\graf{R}$, which means that it appears once for gluing $\graf{R}$ from its
  elementary graphs, and hence also appears once for gluing together $\graf{S}$
  from the $\graf{S}_x$.  Since none of the $\graf{S}_x$ have isolated edges, no
  step in the gluing procedure can collapse flags.  Since a generic morphism
  induces a bijection on ports, it is also clear that $h^F$ is a bijection on
  tails, and it sends edges to edges (hence verifying (iv)), just by
  commutativity of the left hand square and the involutions, and axiom (ii)
  follows also from this.  We need to verify axiom (iii), that endpoints of
  contracted edges of $h$ have equal image under $h_V$.  Contracted edges in
  $\sigma$ is the same thing as inner edges of $\graf{S}$ not in the image of
  $\graf{R}$.  Let $v,v'\in V_\sigma$ be the endpoints of a contracted edge of
  $\sigma$.  That the edge is not the image of an inner edge in $\graf{R}$ means
  that $v$ and $v'$ belong to the same subgraph $\graf{S}_x$, and hence they
  have the same image $x$ under $h_V$ as required.  Finally, it is clear that $h$ is a
  compression (so axiom (ii') is void).

  Conversely, given a compression morphism $h: \sigma\to\rho$ in the BM 
  category, we must construct a diagram like \eqref{eq:Sub}.  For each $x\in 
  V_\rho$, let $\graf{S}_x$ denote the open subgraph of $\graf{S}$ spanned by
  the set of vertices $h_V^{-1}(x)$; it is effective since $h_V$ is surjective.
  The map $h^F: F_\rho \to F_\sigma$ already gives us $H_{\graf{R}} \to 
  H_{\graf{S}}$.  For each vertex $x\in V_\rho$, the map $h^F$ restricts to a
  bijection $\partial^{-1}(x) \isopil h^F(F_\rho) \cap \partial^{-1} h_V^{-1}(x)$ (by 
  Lemma~\ref{lem:boundarypres}),
  which tells us on one hand that each flag in $H_{\graf{R}}$ incident to $x$
  is sent to a flag of the subgraph $\graf{S}_x$, so as to give altogether 
  a well-defined map $H_{\graf{R}} \to \Sub'(\graf{S})$, and on the other hand
  that the right-hand square commutes and is a pullback. Finally we take the map
  $A_{\graf{R}} \to A_{\graf{S}}$ to be
  $$
  h^F + h^F|_{T_\rho} : F_\rho +T_\rho  \to F_\sigma+ T_\sigma,
  $$
  well-defined since $h$ is a compression and hence $h^F$ preserves tails.  The
  left-hand square now commutes by construction.  Since $h^F$ also preserves
  edges (by Lemma~\ref{lem:hj=jh}), this map commutes with the involutions.
  Since $h^F$ is bijective on tails, the new map induces a bijection on ports, as
  required.
  
  It is straightforward to check that the two constructions are inverse to
  each other.
\end{proof}

\section{Borisov--Manin  morphisms as cospans in the Kleisli category}

\begin{prop}\label{prop:pushout}
  In the restricted Kleisli category $\widetilde \Gr$, reduced covers and generic morphisms
  admit pushouts along each other, and the resulting two new morphisms are
  again a reduced cover and a generic morphism:
    $$\xymatrix @!@+6pt {
     \graf{R} \ar[r]^{\text{\rm red.cov.}}\ar[d]_{\text{\rm gen.}} & \graf{R}' 
     \ar@{..>}[d]^{\text{\rm gen.}} \\
     \graf{S} \ar@{..>}[r]_{\text{\rm red.cov.}} & \ulpullback \graf{S}'
  }$$
\end{prop}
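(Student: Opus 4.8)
The plan is to prove Proposition~\ref{prop:pushout} by constructing the pushout explicitly, exploiting the fact that all the colimit computations in $\widetilde\Gr$ are computed levelwise in $\Gr$, where the unit graph is available. First I would set up the data: a reduced cover $\graf{R}\to\graf{R}'$ and a generic morphism $\graf{R}\to\graf{S}$. By the description of generic morphisms as refinements, the generic morphism assigns to each vertex $x\in V_{\graf{R}}$ an effective graph $\graf{S}_x$ with matching interface, glued together according to the recipe $\el(\graf{R})$. The reduced cover $\graf{R}\to\graf{R}'$ corresponds (by the theory of reduced covers as iterated coequaliser quotient morphisms) to a choice of inner edges of $\graf{R}'$ along which $\graf{R}$ has been cut, i.e.\ $\graf{R}$ is $\graf{R}'$ with some edges un-grafted into pairs of ports.

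The key construction is to define $\graf{S}'$ as the result of performing, on $\graf{S}$, the \emph{same} graftings that produce $\graf{R}'$ from $\graf{R}$. Concretely, each edge of $\graf{R}'$ that gets cut in $\graf{R}$ corresponds to a pair of ports; under the generic morphism these ports are ports of the $\graf{S}_x$'s (generic morphisms are bijective on ports), so I can graft $\graf{S}$ along the corresponding pairs of ports to obtain $\graf{S}'$. This grafting is exactly the coequaliser/pushout over unit graphs described in~\ref{colimits}, and it exists precisely because the relevant arcs are genuine ports and the graphs involved are effective—here is where excluding isolated edges and the unit graph from $\graf{2}\kat{-Gr}_{\iso}$ (see~\ref{adjusts}) is essential, as flagged in the text. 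The induced map $\graf{S}\to\graf{S}'$ is a reduced cover (it is a grafting along ports, hence an iterated coequaliser quotient morphism), and the map $\graf{R}'\to\graf{S}'$ is generic, refining each vertex of $\graf{R}'$ by the corresponding $\graf{S}_x$ (the refinement data is unchanged—only the gluing recipe is enlarged by the new graftings).

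Next I would verify that the resulting square commutes and is a pushout in $\widetilde\Gr$. Commutativity follows because both composites refine the vertices by the same $\graf{S}_x$'s and then graft along the same pairs of ports—the order of refining and grafting does not matter, since grafting at ports and substituting into disjoint vertices are independent operations affecting disjoint parts of the data. For the universal property I would take an arbitrary cocone, i.e.\ a graph $\graf{T}$ with a generic morphism $\graf{R}'\to\graf{T}$ and a reduced cover $\graf{S}\to\graf{T}$ agreeing on $\graf{R}$, and produce the unique mediating morphism $\graf{S}'\to\graf{T}$. Since $\graf{S}'$ is obtained from $\graf{S}$ by grafting along a specified set of port-pairs, a morphism out of $\graf{S}'$ is the same as a morphism out of $\graf{S}$ respecting those identifications; the cocone condition supplies exactly these identifications, so the mediating morphism exists and is forced. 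Uniqueness is where Lemma~\ref{lem:epi} does the work: the reduced cover $\graf{S}\to\graf{S}'$ is an epimorphism in the Kleisli category, so any mediating morphism is determined by its precomposite with $\graf{S}\to\graf{S}'$.

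I expect the main obstacle to be the bookkeeping in the universal-property verification, specifically confirming that the mediating morphism is well-defined as a \emph{Kleisli} morphism (a morphism of graphical species $\graf{S}'\to\ov{\graf{T}}$) rather than merely an etale morphism, and that it genuinely factors both legs of the cocone. The conceptual content—that grafting at ports is a colimit and hence interacts cleanly with the refinement data—is clean, but I would need to check carefully that the pushout computed levelwise in $\Gr$ lands back in $\widetilde\Gr$ (no isolated edges are created) and that both output legs fall in the correct classes. The effectivity of all the $\graf{S}_x$, together with the reducedness of the cover (no edge hit more than twice), guarantees that the grafting never collapses flags or creates isolated edges, so the output is again an object of $\widetilde\Gr$ with the two new legs a reduced cover and a generic morphism respectively.
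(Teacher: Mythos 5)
Your construction is essentially the paper's own proof: the paper likewise reduces to a single gluing, builds $\graf{S}'$ by adding the new gluing as an extra unit-graph summand to the coequaliser presentation of $\graf{S}$ (noting, as you do, that genericity --- injectivity on edges and bijectivity on ports --- makes this legitimate), reads off the reduced cover $\graf{S}\to\graf{S}'$ from the coequaliser's universal property, and verifies the pushout property via Lemma~\ref{lem:epi} for uniqueness together with the observation that the two ports, coming from $\graf{R}$, are already glued under $\graf{R}\to\graf{R}'\to\graf{T}$ for existence. One small correction: the universal property must be tested against \emph{arbitrary} Kleisli cocones $\graf{R}'\to\graf{T}$, $\graf{S}\to\graf{T}$, not only those whose legs are respectively generic and reduced covers as you state --- though your argument for the mediating morphism nowhere uses that restriction, so nothing is actually lost.
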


Intuitively, $\graf{S}'$ is obtained by refining $\graf{R}'$ by the exact same
prescription as $\graf{S}$ refines $\graf{R}$.  This makes sense since
$\graf{R}'$ and $\graf{R}$ have the `same vertices'.  Equivalently, $\graf{S}'$
is obtained by gluing some ports of $\graf{S}$ in the same way as $\graf{R}'$ is
obtained from $\graf{R}$.  This makes sense since $\graf{S}$ and $\graf{R}$ have
the `same ports'.  The proof formalises this in terms of colimits:

\begin{proof}
  Since a reduced cover can be realised as a sequence of quotient morphisms of 
  simple coequaliser diagrams, it is enough to treat pushout along such a morphism, 
  so we assume that the reduced cover $\graf{R}\to \graf{R}'$ is the quotient
  $\triv' \rightrightarrows \graf{R} \to \graf{R}'$ expressing the gluing of
  two ports of $\graf{R}$ to form an inner edge of $\graf{R}'$.
  The generic morphism $\graf{R}\to \graf{S}$ is given by a functor 
  $\el(\graf{R}) \to \Gr$, which in turn unpacks to a coequaliser diagram
  $$
  \xymatrix{ 
  \sum_e \triv_e \phantom{x} \ar[r]<+3.5pt> \ar[r]<-3.5pt> & 
  \phantom{!}\sum_x \graf{S}_x  \ar[r] & \graf{S}
  }
  $$
  where $e$ runs over the inner edges of $\graf{R}$ and $x$ runs over the 
  vertices of $\graf{R}$, and the parallel arrows express the incidences of 
  inner edges and vertices in $\graf{R}$. 
  The extra gluing producing $\graf{R}'$ from $\graf{R}$
  can now be added to this diagram as an extra summand in the edge sum.
  This is possible: since that extra gluing connects two distinct port-edges 
  of $\graf{R}$ it will also connect two distinct port-edges of $\graf{S}$,
  because a 
  generic morphism is injective on edges and preserves ports.  This gives another
  coequaliser diagram, pictured as the bottom row:
  $$
  \xymatrix{ 
  \phantom{\graf{I}' +} \sum_e \triv_e \phantom{x} 
  \ar[d] \ar[r]<+3.5pt> \ar[r]<-3.5pt> & 
  \phantom{!}\sum_x \graf{S}_x  \ar[d]^= \ar[r] & \graf{S}  \ar@{..>}[d]\\
  \graf{I}' + \sum_e \triv_e \phantom{x} \ar[r]<+3.5pt> \ar[r]<-3.5pt> & 
  \phantom{!}\sum_x \graf{S}_x  \ar[r] & \graf{S}' 
  }
  $$
  expressing a generic morphism $\graf{R}'\to\graf{S}'$.
  The universal property of $\graf{S}$ as a coequaliser now induces the dotted 
  arrow,
  which is the reduced cover given precisely by gluing those two ports,
  i.e.~is the quotient morphism
  $\triv'   \rightrightarrows \graf{S} \to \graf{S}'$.
  We now have the promised square of generic and etale morphisms.
  It commutes because the Kleisli morphism $\graf{R}\to \graf{R}'\to 
  \graf{S}'$ is given by $\el(\graf{R}) \to \Gr\comma \graf{S}'$ with colimit
  $\graf{S}$ (over $\graf{S}'$) by construction.  This shows that
  the generic-free factorisation is $\graf{R} \to \graf{S} \to \graf{S}'$ as
  required.  
  Finally we check that this square is a pullback in the Kleisli 
  category.  Given a commutative square of Kleisli morphisms
  $$\xymatrix{
     \graf{R} \ar[r]\ar[d] & \graf{R}' \ar[d] \\
     \graf{S} \ar[r] & \graf{T}
  }$$
  we need to show there is a unique Kleisli morphism $\graf{S}' \to \graf{T}$ making 
  the triangles commute.  There can be at most one
  extension
  $$\xymatrix{
     \graf{S} \ar[r]\ar[rd] & \graf{S}' \ar@{..>}[d] \\
       & \graf{T}  ,
  }$$
  because $\graf{S}\to\graf{S}'$ is an epimorphism by Lemma~\ref{lem:epi}.
  The graphs $\graf{S}$ and $\graf{S}'$ differ only in 
  that two ports have been glued, so the extension exists if and only if the morphism to
  $\graf{T}$ glues the two ports.  But the two ports come
  from $\graf{R}$, and via $\graf{R}\to\graf{R}'\to\graf{T}$ they are certainly
  glued, so $\graf{S}'\to\graf{T}$ is unique as required.
\end{proof}

%
%

\begin{blanko}{Remarks.}
  (1) The proposition is analogous to Lemma~2.4.7 in \cite{Kock:1407.3744}, which
  concerns open inclusions and generic morphisms in the category of acyclic
  directed graphs.
  The statement is true more generally for etale morphisms that are injective on
  vertices, but not for all etale morphisms.  It would not be true either if we
  permitted the unit graph to be substituted into a $2$-valent vertex.  Formally
  this would break an injectivity condition.  Essentially, the problem is the
  following.  The substitution of a unit graph into the vertex of a graph
  consisting of only one vertex and one loop edge ought to yield a loop without
  vertices.  But this is not a valid graph in $\Gr$ (and is not included in the
  Borisov--Manin definition either).  This may be seen as a shortcoming of both
  definitions.  References \cite{Markl-Merkulov-Shadrin}, \cite{Yau-Johnson},
  \cite{Hackney-Robertson-Yau}, \cite{Batanin-Berger:1305.0086} address the
  issue with the vertex-less loop in various ways.

  (2) Generic-free pushouts in the simplex category
  $\Delta$ play an essential role in the theory of decomposition
  spaces~\cite{Galvez-Kock-Tonks:1512.07573} (also called unital $2$-Segal
  spaces~\cite{Dyckerhoff-Kapranov:1212.3563}): a decomposition space is
  a simplicial space sending generic-free pushouts to pullbacks,
  precisely the condition ensuring associativity of incidence algebras.
  The proposition suggests that it might be possible to develop analogous theory for
  compact coloured props.
\end{blanko}

\begin{blanko}{Cospans and factorisation system.}
  In any category in which two classes of morphisms (containing all the
  isomorphisms) interact under pushouts as in the proposition, one can define a
  new category with the same objects, whose morphisms are isomorphism classes of
  cospans whose left leg belongs to one class and the right leg belongs to the
  other class.  Such cospans are composed by taking pushout.  By construction
  every morphism admits a factorisation as a morphism in the first class
  followed by a reversed-morphism in the second class.   This does {\em not}
  always constitute a factorisation system, since the factorisation is not
  necessarily unique up to {\em unique} comparison isomorphism, as required in a
  factorisation system.  However, if one of the two classes consists of epimorphisms,
  then the comparison isomorphism {\em will} be unique.
  
  

  In the present case, the two classes are the reduced covers and the generic 
  morphisms.  Denote by $\kat{Cosp}$ the category
  whose objects are graphs without isolated edges, and whose morphisms are
  iso-classes of cospans in $\widetilde\Gr$ in which the first leg is a reduced cover
  and the second leg is a generic morphism
  $$\xymatrix @C+1.3em {
     \graf{T}\phantom{|}\ar[r]^-{\text{red.cover}} & \phantom{|}\graf{S}\phantom{|} & 
     \ar[l]_-{\text{refinement}} \phantom{|}\graf{R}   .
  }$$
  Since reduced covers 
  are epimorphisms in the Kleisli category (\ref{lem:epi}), we thus have:
\end{blanko}

\begin{prop}
  The category $\kat{Cosp}$ has a factorisations system in which the left-hand
  class consists of the cospans `reduced-cover / identity-read-backwards' and
  the right-hand class consists of `identity / generic-morphisms-backwards'.
\end{prop}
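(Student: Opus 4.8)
The plan is to establish the two defining features of a factorisation system in the sense used in Proposition~\ref{prop:fact}: that every morphism of $\kat{Cosp}$ factors as a left-class morphism followed by a right-class one, and that this factorisation is \emph{essentially unique}, i.e.\ unique up to a unique comparison isomorphism. Existence is immediate from the construction of $\kat{Cosp}$: a general morphism is a cospan $\graf{T}\xrightarrow{c}\graf{S}\xleftarrow{r}\graf{R}$ with $c$ a reduced cover and $r$ generic, and I factor it as the left-class cospan $\graf{T}\xrightarrow{c}\graf{S}\xleftarrow{\mathrm{id}}\graf{S}$ followed by the right-class cospan $\graf{S}\xrightarrow{\mathrm{id}}\graf{S}\xleftarrow{r}\graf{R}$; the pushout computing this composite is taken along two identity legs over $\graf{S}$, so it returns $\graf{S}$ and the composite cospan is the original one. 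That the two classes contain the isomorphisms and are closed under composition is routine---the composite of two left-class cospans is again computed over an identity leg, so has again an identity right leg and a left leg which is a composite of reduced covers, and dually for the right class.

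The one step with genuine content, and what I expect to be the main obstacle, is to show that the factorisation is essentially \emph{unique}; following the remark preceding the statement, I reduce this to the assertion that every left-class morphism is an epimorphism in $\kat{Cosp}$. To prove the latter, let $\ell=[\graf{A}\xrightarrow{c}\graf{B}\xleftarrow{\mathrm{id}}\graf{B}]$ be a left-class cospan and suppose $\alpha=[\graf{B}\xrightarrow{a_1}\graf{P}\xleftarrow{a_2}\graf{X}]$ and $\beta=[\graf{B}\xrightarrow{b_1}\graf{Q}\xleftarrow{b_2}\graf{X}]$ satisfy $\alpha\circ\ell=\beta\circ\ell$. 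Because the right leg of $\ell$ is an identity, no genuine pushout intervenes and the composites are $[\graf{A}\xrightarrow{a_1 c}\graf{P}\xleftarrow{a_2}\graf{X}]$ and $[\graf{A}\xrightarrow{b_1 c}\graf{Q}\xleftarrow{b_2}\graf{X}]$; their equality as iso-classes of cospans yields an isomorphism $\theta\colon\graf{P}\to\graf{Q}$ in $\widetilde\Gr$ with $\theta\,(a_1 c)=b_1 c$ and $\theta\,a_2=b_2$. Here the crux enters: $c$ is a reduced cover, hence an epimorphism in the Kleisli category by Lemma~\ref{lem:epi}, so $\theta\,a_1 c=b_1 c$ cancels to $\theta\,a_1=b_1$. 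Together with $\theta\,a_2=b_2$ this exhibits $\theta$ as a cospan isomorphism $\alpha\cong\beta$, whence $\alpha=\beta$, so $\ell$ is epic.

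With this in hand the essential uniqueness is formal. Any factorisation of $\graf{T}\xrightarrow{c}\graf{S}\xleftarrow{r}\graf{R}$ through a middle object $\graf{M}$ produces, by the same identity-leg computation, a cospan isomorphism $\graf{M}\isopil\graf{S}$ compatible with both legs, so every admissible middle object is isomorphic to $\graf{S}$ and a comparison isomorphism between two factorisations exists. If two such comparisons $\Psi_1,\Psi_2$ agree after precomposition with the left-class leg, then $\Psi_1=\Psi_2$ because that leg is epic. Thus the factorisation is unique up to a unique comparison isomorphism, which is precisely the situation anticipated in the preamble---a cospan construction one of whose classes consists of epimorphisms---and $\kat{Cosp}$ carries the asserted factorisation system.
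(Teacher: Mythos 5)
Your proof is correct and follows essentially the same route as the paper, which disposes of this proposition in the remark immediately preceding it: existence of the factorisation is built into the cospan construction, and uniqueness of the comparison isomorphism reduces to the left-hand class consisting of epimorphisms, which you correctly trace back to Lemma~\ref{lem:epi}. The only difference is that you spell out the verification that the identity-right-leg cospans are epic in $\kat{Cosp}$ (by cancelling the reduced cover inside the witnessing cospan isomorphism), a bridging step the paper leaves implicit.
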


\begin{prop}\label{prop:commlawcommlaw}
  The commutation law in $\widetilde\Gr$ given by pushout is compatible
  with the commutation law in $\kat{BM}$ given by the factorisation system 
  (\ref{BMcommlaw}) via the equivalences $\kat{BM}_{\text{\rm gr}} \isopil \widetilde\Gr_{\text{\rm rc}}$
  and $\kat{BM}_{\text{\rm comp}} \isopil \widetilde\Gr\op_{\text{\rm gen}}$.
\end{prop}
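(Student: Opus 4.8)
The plan is to make precise what \emph{compatibility} of the two commutation laws means, and then to verify it by tracing the explicit descriptions through the equivalences $\Phi_1$ and $\Phi_2$. First I would fix the input data of \ref{BMcommlaw}: a compression $h:\tau\to\omega$ and a grafting $k:\omega\to\rho$ in the `wrong order'. Writing $\hat{(-)}$ for the object assignment shared by $\Phi_1$ and $\Phi_2$ (from \ref{prop:JKrc=BMgr}), the equivalence $\Phi_1$ sends $k$ to a reduced cover $\hat\omega\to\hat\rho$, while $\Phi_2$, being valued in $\widetilde\Gr_{\text{\rm gen}}\op$, \emph{reverses} $h$ into a generic morphism $\hat\omega\to\hat\tau$. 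Thus the wrong-order composable pair in $\kat{BM}$ becomes a span out of $\hat\omega$ consisting of a generic morphism and a reduced cover---exactly the input to the pushout of \ref{prop:pushout}. Dually, the right-order output $(g,c)$ of \ref{BMcommlaw}, with $g:\tau\to\sigma$ a grafting and $c:\sigma\to\rho$ a compression, is carried to a reduced cover $\hat\tau\to\hat\sigma$ and a generic morphism $\hat\rho\to\hat\sigma$. Compatibility is then precisely the assertion that $\Phi_1,\Phi_2$ carry the completed BM square of \ref{BMcommlaw} to the pushout square of \ref{prop:pushout}; concretely, that the pushout object $\graf{S}'$ may be identified with $\hat\sigma$ in a way matching the two legs.

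To prove this I would compare the two explicit constructions of the middle object directly. On the BM side, \ref{BMcommlaw} describes $\sigma$ as having the same vertices and flags as $\tau$, with only the involution changed so that $g$ grafts exactly the tails that $k$ grafts---which makes sense because $h$, being a compression, is bijective on tails. On the Kleisli side, \ref{prop:pushout} describes $\graf{S}'$ as $\graf{S}=\hat\tau$ with exactly those ports glued that correspond, along the generic morphism $\hat\omega\to\hat\tau$ (bijective on ports), to the gluing $\hat\omega\to\hat\rho$ effected by $\Phi_1(k)$. Under the dictionary of \ref{prop:JKrc=BMgr}---tails of a BM graph correspond to ports of its JK avatar, and grafting a pair of tails corresponds to gluing the matching pair of ports---these are literally the same operation, so $\graf{S}'\cong\hat\sigma$. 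It then remains to check that the two legs agree, but this is immediate: the reduced cover $\hat\tau\to\graf{S}'$ is the port-gluing quotient, which is $\Phi_1(g)$ by the construction of $\Phi_1$ on graftings, and the generic morphism $\hat\rho\to\graf{S}'$ refines $\hat\rho$ by the same subgraphs as $\hat\omega\to\hat\tau$ does, which is $\Phi_2(c)$ by the description of $\Phi_2$ on compressions (both $c$ and $h$ compressing the same `subgraphs', by \ref{BMcommlaw} and \ref{prop:JKgen=BMcomp}).

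As a conceptual cross-check, and to avoid relying solely on the explicit identification, I would note that the translated BM square commutes (since $kh=cg$) and exhibits a reduced cover $\hat\tau\to\hat\sigma$ together with a generic morphism $\hat\rho\to\hat\sigma$ under the same span; the universal property of the pushout therefore supplies a comparison $\graf{S}'\to\hat\sigma$, and this comparison is \emph{unique} because reduced covers are epimorphisms in the Kleisli category (\ref{lem:epi}). The explicit description above shows it is invertible. Finally, since both commutation laws arise from factorisation systems (\ref{prop:fact} on the BM side and the system on $\kat{Cosp}$ on the Kleisli side), their structural axioms hold automatically, as already observed in \ref{BMcommlaw}; hence agreement of the single `complete-the-square' operation is all that compatibility requires. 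I expect the only real difficulty to be bookkeeping: keeping straight the direction reversal induced by $\Phi_2$ (so that composable BM pairs become spans) and matching the two dictionaries on the nose---`graft the same tails' against `glue the same ports', and `compress the same subgraphs' against `refine by the same subgraphs'. Once the object- and morphism-translations of \ref{prop:JKrc=BMgr} and \ref{prop:JKgen=BMcomp} are applied carefully, the mathematical content is immediate.
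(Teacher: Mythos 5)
Your proposal is correct and takes essentially the same route as the paper: the paper's own proof simply asserts that applying $\Phi_1$ and $\Phi_2$ to the square of \ref{BMcommlaw} yields the pushout square of \ref{prop:pushout}, ``clear from the explicit descriptions,'' and your argument is precisely a careful unpacking of that claim via the tails/ports and compress/refine dictionaries. The extra cross-check via the universal property and Lemma~\ref{lem:epi} is a reasonable supplement but not needed once the explicit identification is made.
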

\begin{proof}
  The statement is that given $h$ and $k$ in the commutation square in 
  \ref{BMcommlaw}, applying the two functors yields a pushout square as in 
  Proposition~\ref{prop:pushout}.  But this is clear from the explicit 
  descriptions given in \ref{BMcommlaw} and \ref{prop:pushout}.
\end{proof}

\begin{theorem}\label{thm:main}
  The reduced-cover/generic cospan category $\kat{Cosp}$ is equivalent to the
  Borisov--Manin category $\kat{BM}$.
\end{theorem}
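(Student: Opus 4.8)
The plan is to assemble the two equivalences $\Phi_1$ and $\Phi_2$ into a single functor $\Phi\colon \kat{BM} \to \kat{Cosp}$ and to verify it is an equivalence. On objects, $\Phi$ is the bijection between BM graphs and JK graphs without isolated edges furnished by the object part of Proposition~\ref{prop:JKrc=BMgr} (the same correspondence underlies both $\Phi_1$ and $\Phi_2$). On morphisms, I would exploit the factorisation system of Proposition~\ref{prop:fact}: a BM morphism $h\colon \tau \to \rho$ factors essentially uniquely as a grafting $g\colon \tau \to \sigma$ followed by a compression $c\colon \sigma \to \rho$, and I send $h$ to the iso-class of the cospan
$$\xymatrix @C+1.3em {
  \Phi\tau \ar[r]^-{\Phi_1 g} & \Phi\sigma & \ar[l]_-{\Phi_2 c} \Phi\rho ,
}$$
whose first leg is the reduced cover $\Phi_1 g$ and whose second leg is the generic morphism $\Phi_2 c$ read backwards. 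Since the grafting/compression factorisation is unique up to a unique comparison isomorphism of the middle object, this is a well-defined map into iso-classes of cospans.

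The heart of the matter is functoriality. Given composable BM morphisms $\tau \xrightarrow{h_1} \rho \xrightarrow{h_2} \upsilon$ with factorisations $h_1 = c_1 g_1$ and $h_2 = c_2 g_2$, the composite $h_2 h_1 = c_2 (g_2 c_1) g_1$ contains the out-of-order pair $g_2 c_1$ (a compression followed by a grafting). The commutation law~\ref{BMcommlaw} rewrites this as $c_1' g_2'$ with $g_2'$ a grafting and $c_1'$ a compression, yielding the grafting/compression factorisation $h_2 h_1 = (c_2 c_1')(g_2' g_1)$. On the cospan side, the composite of $\Phi h_1$ and $\Phi h_2$ is computed by the pushout of Proposition~\ref{prop:pushout} over $\Phi\rho$, namely the pushout of the generic $\Phi_2 c_1$ against the reduced cover $\Phi_1 g_2$. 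Proposition~\ref{prop:commlawcommlaw} says precisely that $\Phi_1$ and $\Phi_2$ carry the BM commutation square for the pair $(c_1, g_2)$ to this pushout square; combined with the functoriality of $\Phi_1$ on graftings and of $\Phi_2$ on compressions, this identifies the two legs of $\Phi(h_2 h_1)$ with the composite reduced cover and composite generic produced by the pushout, so $\Phi(h_2 h_1) = \Phi h_2 \circ \Phi h_1$.

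It remains to see that $\Phi$ is an equivalence. It is bijective on objects by Proposition~\ref{prop:JKrc=BMgr}. For full faithfulness, observe that each of $\kat{BM}$ and $\kat{Cosp}$ carries a factorisation system---Proposition~\ref{prop:fact} on the one side, and the proposition immediately preceding this theorem on the other---so that in either category a morphism is the same datum as a composable pair consisting of a member of the left class followed by a member of the right class, taken up to isomorphism of the shared middle object. Reading off hom-sets, a morphism $\tau \to \rho$ in $\kat{BM}$ is an iso-class of pairs (grafting $g\colon \tau \to \sigma$, compression $c\colon \sigma \to \rho$), while a morphism $\Phi\tau \to \Phi\rho$ in $\kat{Cosp}$ is an iso-class of cospans (reduced cover $\Phi\tau \to \Phi\sigma$, generic $\Phi\rho \to \Phi\sigma$). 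Since $\Phi_1$ and $\Phi_2$ are equivalences on the respective classes and induce a bijection between the two sets of admissible middle objects, $\Phi$ induces a bijection on hom-sets.

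I do not expect any single step to pose a serious obstacle: the substantive work has been front-loaded into Propositions~\ref{prop:JKrc=BMgr}, \ref{prop:JKgen=BMcomp}, \ref{prop:pushout}, and~\ref{prop:commlawcommlaw}, and the theorem is their assembly. The one place demanding genuine care is matching the two composition laws---BM composition, which rearranges a wrong-order pair via the factorisation-system commutation law, against Kleisli cospan composition, which rearranges it by pushout---and this is exactly the content delivered by Proposition~\ref{prop:commlawcommlaw}.
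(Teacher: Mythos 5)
Your proposal is correct and follows essentially the same route as the paper: define $\Phi$ via the grafting/compression factorisation, establish functoriality by matching the two commutation laws through Proposition~\ref{prop:commlawcommlaw}, and deduce full faithfulness from the two factorisation systems together with the equivalences $\Phi_1$ and $\Phi_2$ (the paper phrases this last step as a coend computation over isomorphism classes of middle objects, which is exactly your ``iso-class of composable pairs'' argument in symbols).
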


\begin{proof}
  The argument is a general fact about gluing functors along commutation laws.
  We already have functors $\Phi_1: \kat{BM}_{\text{\rm gr}} \to
  {\widetilde\Gr}_{\text{\rm rc}}$ and $\Phi_2: \kat{BM}_{\text{\rm comp}} \to
  \widetilde\Gr\op_{\text{\rm gen}}$ which agree on objects and on isomorphisms.
  We define a functor $\Phi:\kat{BM} \to \kat{Cosp}$ on morphisms by the
  assignment
  $$
  \Phi(h) = \Phi_2(c) \circ \Phi_1(g)
  $$
  with reference to the factorisation $h = c \circ g$ of 
  Proposition~\ref{prop:fact}:
  $$ 
  \big(\xymatrix{ \tau \ar[r]^g & \sigma \ar[r]^c & \rho}\big) 
  \qquad \longmapsto \qquad
  \big(\xymatrix{ \graf{T} \ar[r]^{\Phi_1(g)} & \graf{S} & \ar[l]_{\Phi_2(c)} 
  \graf{R} }\big). 
  $$
  The factorisation $h = c \circ g$ is only determined up to isomorphism, but
  that is good enough, because the morphisms in $\kat{Cosp}$ are iso-classes of
  cospans.  This assignment respects composition by
  Proposition~\ref{prop:commlawcommlaw}, hence $\Phi$ is a functor.  It is clear
  that $\Phi$ agrees with $\Phi_1$ and $\Phi_2$ on the subcategories, and it
  induces an equivalence at the level of maximal groupoids (this follows from
  either Proposition~\ref{prop:JKrc=BMgr} and
  Proposition~\ref{prop:JKgen=BMcomp}).  In particular it is essentially
  surjective.  Finally is follows from 
  Proposition~\ref{prop:JKrc=BMgr} and
  Proposition~\ref{prop:JKgen=BMcomp} that $\Phi$ is
  fully faithful:
  \begin{align*}
  \Hom_{\kat{BM}}(\tau,\rho) 
  &= \int^{\sigma\in\operatorname{Iso}(\kat{BM})} \Hom_{\kat{BM}_{\text{\rm gr}}}(\tau,\sigma)\times
  \Hom_{\kat{BM}_{\text{\rm comp}}}(\sigma,\rho)
\\
& = \int^{\graf{S}\in\operatorname{Iso}\widetilde\Gr} \Hom_{\widetilde{\Gr}_{\text{\rm rc}}}(\graf{T},\graf{S}) 
  \times \Hom_{\widetilde{\Gr}_{\text{\rm gen}}}(\graf{R},\graf{S}) 
  \\[2pt]
  &= \Hom_{\kat{Cosp}}(\graf{T},\graf{R}).
  \end{align*}
  Here the integral signs are coends (see \cite[\S IX.6]{MacLane:categories}),
  which is the appropriate way of summing over all factorisations modulo
  isomorphisms at the middle object.
\end{proof}

\begin{blanko}{Remark.}
  The BM category of graphs plays an important role in the work of Kaufmann and
  Ward~\cite{Kaufmann-Ward:1312.1269} as source of examples of {\em Feynman
  categories}, an interesting alternative formalism to coloured operads (the two
  notions have been established equivalent in
  \cite{Batanin-Kock-Weber:1510.08934}).  In that context, the interest is in
  the full subcategory of the BM category spanned by the disjoint unions of
  corollas.  They show that every such morphism $\tau\to \rho$ is specified by a
  so-called ghost graph.  Their construction is clarified by the observation
  that their ghost graph is precisely the graph $\sigma$ appearing in the
  grafting-compression factorisation, or equivalently, is the apex of the 
  corresponding cospan.
\end{blanko}

\medskip

\noindent {\bf Acknowledgments.} I thank Ralph Kaufmann for 
explaining the Borisov--Manin graph category to me, some years ago.
I first met Nils Baas at the IMA in Minneapolis in 2004, and over the years
I have learned a lot from many discussions with him on graphs and higher structures.
I humbly dedicate this little piece to him at the occasion of his 70th 
birthday.  I regret I could not participate in the birthday conference.
This work has been supported by grant number MTM2013-42178-P of Spain.

\hyphenation{mathe-matisk}


\begin{thebibliography}{10}

\bibitem{Batanin-Berger:1305.0086}
{\sc Michael Batanin {\rm and }Clemens Berger}.
\newblock {\em Homotopy theory for algebras over polynomial monads}.
\newblock Preprint, arXiv:1305.0086.

\bibitem{Batanin-Kock-Weber:1510.08934}
{\sc Michael Batanin, Joachim Kock, {\rm and }Mark Weber}.
\newblock {\em Regular patterns, substitudes, {F}eynman categories and
  operads}.
\newblock Preprint, arXiv:1510.08934.

\bibitem{Berger:Adv}
{\sc Clemens Berger}.
\newblock {\em A cellular nerve for higher categories}.
\newblock Adv. Math. {\bf 169} (2002), 118--175.

\bibitem{Borisov-Manin:0609748}
{\sc Dennis~V. Borisov {\rm and }Yuri~I. Manin}.
\newblock {\em Generalized operads and their inner cohomomorphisms}.
\newblock In {\em Geometry and dynamics of groups and spaces}, vol. 265 of
  Progress in Mathematics, pp. 247--308. Birkh\"auser, Basel, 2008.
\newblock ArXiv:math/0609748.

\bibitem{Dyckerhoff-Kapranov:1212.3563}
{\sc Tobias Dyckerhoff {\rm and }Mikhail Kapranov}.
\newblock {\em Higher Segal spaces I}.
\newblock To appear in Lecture Notes in Mathematics. Springer-Verlag, 2017.
\newblock ArXiv:1212.3563.

\bibitem{Galvez-Kock-Tonks:1512.07573}
{\sc Imma G{\'a}lvez-Carrillo, Joachim Kock, {\rm and }Andrew Tonks}.
\newblock {\em Decomposition spaces, incidence algebras and {M}{\"o}bius
  inversion {I}: basic theory}.
\newblock Preprint, arXiv:1512.07573.

\bibitem{Getzler-Kapranov:9408}
{\sc Ezra Getzler {\rm and }Mikhail Kapranov}.
\newblock {\em Modular operads}.
\newblock Compositio Math. {\bf 110} (1998), 65--126.

\bibitem{Hackney-Robertson-Yau}
{\sc Philip Hackney, Marcy Robertson, {\rm and }Donald Yau}.
\newblock {\em Infinity properads and infinity wheeled properads}, vol. 2147 of
  Lecture Notes in Mathematics.
\newblock Springer, Cham, 2015.

\bibitem{Joyal-Kock:0908.2675}
{\sc Andr{\'e} Joyal {\rm and }Joachim Kock}.
\newblock {\em {F}eynman graphs, and nerve theorem for compact symmetric
  multicategories (extended abstract)}.
\newblock In {\em Proceedings of the 6th International Workshop on Quantum
  Physics and Logic (Oxford 2009)}, vol. 270 of Electronic Notes in Theoretical
  Computer Science, pp. 105--113, 2011.
\newblock ArXiv:0908.2675.

\bibitem{Kaufmann-Ward:1312.1269}
{\sc Ralph~M. Kaufmann {\rm and }Benjamin~C. Ward}.
\newblock {\em Feynman Categories}.
\newblock Preprint, arXiv:1312.1269.

\bibitem{Kock:0807}
{\sc Joachim Kock}.
\newblock {\em Polynomial functors and trees}.
\newblock Int. Math. Res. Notices {\bf 2011} (2011), 609--673.
\newblock ArXiv:0807.2874.

\bibitem{Kock:1407.3744}
{\sc Joachim Kock}.
\newblock {\em Graphs, hypergraphs, and properads}.
\newblock Collect. Math. {\bf 67} (2016), 155--190.
\newblock ArXiv:1407.3744.

\bibitem{KM:9402}
{\sc Maxim Kontsevich {\rm and }Yuri~I. Manin}.
\newblock {\em {G}romov-{W}itten classes, quantum cohomology, and enumerative
  geometry}.
\newblock Comm. Math. Phys. {\bf 164} (1994), 525--562.
\newblock ArXiv:hep-th/9402147.

\bibitem{MacLane:categories}
{\sc Saunders {Mac~Lane}}.
\newblock {\em Categories for the working mathematician, second edition}.
\newblock No.~5 in Graduate Texts in Mathematics. Springer-Verlag, New York,
  1998.

\bibitem{Markl-Merkulov-Shadrin}
{\sc Martin Markl, Sergei Merkulov, {\rm and }Sergey Shadrin}.
\newblock {\em Wheeled {PROP}s, graph complexes and the master equation}.
\newblock J. Pure Appl. Algebra {\bf 213} (2009), 496--535.
\newblock ArXiv:math/0610683.

\bibitem{Rosebrugh-Wood:2002}
{\sc Robert Rosebrugh {\rm and }Richard~J. Wood}.
\newblock {\em Distributive laws and factorization}.
\newblock J. Pure Appl. Algebra {\bf 175} (2002), 327--353.

\bibitem{Weber:TAC13}
{\sc Mark Weber}.
\newblock {\em Generic morphisms, parametric representations and weakly
  {C}artesian monads}.
\newblock Theory Appl. Categ. {\bf 13} (2004), 191--234.

\bibitem{Yau-Johnson}
{\sc Donald Yau {\rm and }Mark~W. Johnson}.
\newblock {\em A foundation for {PROP}s, algebras, and modules}, vol. 203 of
  Mathematical Surveys and Monographs.
\newblock American Mathematical Society, Providence, RI, 2015.

\end{thebibliography}

\noindent
Joachim Kock \texttt{<kock@mat.uab.cat>}\\
Departament de matem\`atiques\\
Universitat Aut\`onoma de Barcelona\\
08193 Bellaterra (Barcelona)\\
SPAIN.

\end{document}